\newtheorem{theorem}{Theorem}[section]
\newtheorem{proposition}{Proposition}[section]
\newtheorem{lemma}[theorem]{Lemma}
\newtheorem{corollary}[theorem]{Corollary}
\theoremstyle{remark}
\newcommand\ind[1]{\mathbbm{1}_{\{#1\}}}
\def\E{\mathbb{E}}
\def\N{\mathbb{N}}
\def\P{\mathbb{P}}
\def\R{\mathbb{R}}
\def\cal{\mathcal}
\def\etal{{ et al.}}
\def\eps{\varepsilon}
\title[Interacting branching processes]{Interacting branching processes and linear file-sharing networks}
\author[L. Leskel\"a]{Lasse Leskel\"a}
\address[L. Leskel\"a]{
Aalto University,
Department of Mathematics and Systems Analysis,
PO Box 11100, 00076 Aalto, Finland
}
\email{lasse.leskela@iki.fi}
\urladdr{http://www.iki.fi/lsl}
\author[Ph. Robert]{Philippe Robert}\thanks{Work partially supported by SCALP Project funded by
EEC Network of Excellence  Euro-FGI, and the Academy of Finland}
\author[F. Simatos]{Florian Simatos}
\address[Ph. Robert, F. Simatos]{INRIA Paris --- Rocquencourt, Domaine de Voluceau, BP 105,  
78153   Le Chesnay, France. } 
\email{Philippe.Robert@inria.fr}
\email{Florian.Simatos@inria.fr}
\urladdr{http://www-rocq.inria.fr/\string~robert}
\keywords{Interacting Branching Processes; Peer-to-Peer Algorithms; }
\date{\today}
\begin{document}

\begin{abstract}
File-sharing networks are  distributed systems used to disseminate files  among nodes of a
communication network. The  general simple principle of these systems is  that once a node
has retrieved a file, it may become a server for this file.  In this paper, the capacity
of these  networks is analyzed with  a stochastic model when  there is a  constant flow of
incoming  requests for a  given file.  It is  shown that  the  problem
can be solved by analyzing the  asymptotic  behavior of  a  class  of interacting  branching
processes.  Several  results of independent interest concerning  these 
branching processes are derived and then used to study the file-sharing systems.
\end{abstract}

\maketitle

\noindent\rule{\textwidth}{0.2mm}
\vspace{-1cm}
\tableofcontents
\vspace{-1.3cm}
\noindent\rule{\textwidth}{0.2mm}

\section{Introduction}
File-sharing  networks are  distributed systems  used to  disseminate information  among 
some nodes of a communication network. The general simple principle is the following: once a
node has retrieved  a file it becomes  a server for this  file. An improved
version of  this principle  consists in  splitting the original  file into  several pieces
(called ``chunks'') so that a given node can retrieve simultaneously several chunks of the
same file  from different servers.  In this case,  the rate to get  a given file  may thus
increase significantly as well as the global capacity of the  file-sharing system: a
node becomes a server of  a chunk as soon as it has retrieved it  and not only when it has
the whole file.  These schemes disseminate information efficiently as long as the number of servers is growing rapidly.

The paper  investigates the maximal throughput of these file-sharing  networks, i.e., if
the system can  cope or not with a constant flow of  incoming requests  for a given
file. Two cases are considered, either the file consists of one chunk or the file is split
in two  chunks which  are retrieved sequentially, which we will refer to as a linear
file-sharing network.   It is  assumed that arrival  times are
Poisson and chunk transmission times  are exponentially distributed.  In this setting, the
stability property of  the file-sharing system is expressed as  the ergodicity property of
the associated  Markov process.  Even in  this simple framework,  mathematical studies are
quite  scarce,  see  Qiu  and  Srikant~\cite{Qiu04:0},  Simatos  \etal~\cite{Simatos08:0},
Susitaival \etal~\cite{Susitaival06:0} and references therein.

The main  technical difficulty  in proving stability/instability  results for this  class of
stochastic networks is  that, except for the input, the Markov  process has unbounded jump
rates,  in fact  proportional to  one of  the coordinates  of the  current state.   If $x$
servers have  a chunk, since each  of them can  deliver this chunk, they  globally provide
this chunk  at a  rate proportional  to $x$. For this reason, the classical tools related
to fluid limits cannot be used easily in this setting. See Bramson~\cite{Bramson},  Chen  and Yao~\cite{Chen:14}  and
Robert~\cite{Robert03} for example. However, this class of processes is close  to another
important class  of stochastic processes, namely branching  processes, where a  population of size
$x$ evolves at rate proportional to $x$. As it will be seen, a file sharing system
distributing a file split in  several  chunks can  be  represented  as a  Markov  process
associated to  multi-type branching processes with interaction.

\noindent
{\bf Interacting Branching  Processes.}  Consider the case of a file  split into two chunks,
one considers the  case when  a new user arriving  in the network  requests first
chunk number~$1$  and then  chunk number~$2$. We further assume that a  server of type-$1$,  i.e., having  chunk $1$
only, distributes it at  rate $\mu_1$, while a server of type-$2$, having  chunks $1$ and $2$,
distributes  only chunk~$2$ at  rate $\mu_2$. For $i=0$, $1$, $2$, let $X_i(t)$ be the 
number of  type-$i$ servers at time $t$. A type-$0$ server is simply a user without any
chunk. The crucial observation is that as long as there
are requests for chunk~$i\in\{1,2\}$, then $(X_i(t))$ grows similarly as a branching process where
individuals give birth to one child at rate $\mu_i$, since each server offers a capacity
$\mu_i$ for the requested chunk. On the other hand, a new arrival in $X_2$ corresponds to a
departure in $X_1$, since the new type-$2$ server was a type-$1$ server, thus deaths in $X_1$ are governed by births in $X_2$. 
\begin{figure}[ht]
\begin{center}
\scalebox{.5}{\includegraphics{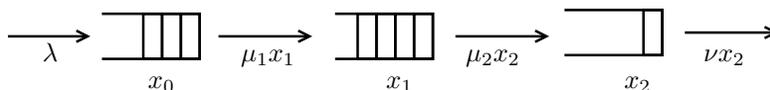}}
\put(-280,0){$ \lambda $}
\put(-240,-10){$ x_0 $}
\put(-205,0){$ \mu_1x_1 $}
\put(-150,-10){$ x_1 $}
\put(-120,0){$ \mu_2x_2 $}
\put(-60,-10){$ x_2 $}
\put(-30,0){$ \nu x_2 $}
\end{center}
\caption{Transition rates outside the boundaries of the file-sharing network with two chunks.}\label{chunk-fig}
\end{figure}

The file-sharing network under consideration can thus be seen as a system of
interacting  branching  processes where  the  births and  the  deaths  of individuals  are
correlated.  In the  simpler  case of  one chunk,  a  description as  a branching  process
(without  interaction)  has  been  used   to  analyze  the  transient  behavior  of  these
file-sharing systems.  See Yang and de Veciana~\cite{Yang04:0}, Dang \etal~\cite{Dang} and
Simatos \etal~\cite{Simatos08:0}.

Sections~\ref{sec:Yule} and~\ref{sec:Interacting} present  results of independent interest
concerning branching processes  where individuals are killed at  the instants $(\sigma_n)$
of a random point process. In Section~\ref{sec:Yule} a criterion for the extinction of the
branching process  is obtained  in terms  of the sequence  $(\sigma_n)$ and  an asymptotic
result is  derived in this  case.  Section~\ref{sec:Interacting} considers the  case where
$(\sigma_n)$  is  the  sequence  of   birth  instants  of  another  independent  branching
process. Several useful  estimates are derived in this setting. These  results are used to
establish  the stability  results concerning  file-sharing systems  with two  chunks.  The
stability  properties of a  network with  a single-chunk  file are  analyzed in  detail in
Section~\ref{sec:two-queue}. The case of file-sharing networks with two chunks is detailed
in Section~\ref{sec:net}.

\medskip

\noindent
{\bf Acknowledgements.} \\
This paper has benefited from various interesting discussions with  S. Borst, I. Norros,
R. N\'u\~{n}ez-Queija, B.J. Prabhu, and  H. Reittu. 

\section{Yule Processes with Killing} \label{sec:Yule}

A Yule process $(Y(t))$ with rate $\mu>0$ is a Markovian branching process with $Q$-matrix 
\begin{equation}\label{Yuledef}
q_Y(x,x+1)=\mu x, \quad\forall x\geq 0.
\end{equation}
A Yule process is simply a pure birth process, where each individual gives birth to a child at rate $\mu$.

Yule processes are the basic branching processes that appear in analyzing the two-chunk
network of Section~\ref{sec:net}. Actually a variant of this stochastic model will be
needed, where some individuals are killed. In this section we study this model when
killings  are given by an exogenous process and occur 
at fixed (random or deterministic) epochs; in Section~\ref{sec:Interacting} killings
result from the interaction with another branching process. 

In terms of branching processes,  this killing procedure amounts to prune
the tree,  i.e., to  cut some  edges of  the tree, and  the subtree  attached to  it. This
procedure is  fairly common  for branching processes,  in the Crump-Mode-Jagers  model for
example,   see  Kingman~\cite{Kingman}.   See  also   Neveu~\cite{Neveu}  or   Aldous  and
Pitman~\cite{Aldous}.

\medskip
\subsection*{A Yule Process Killed at Fixed Instants}
Until the end of this section, it is assumed  that, provided that  it is non-empty, at  epochs $\sigma_n$,
$n\geq  1$, an  individual is  removed from  the population  of an  ordinary  Yule process
$(Y(t))$  with  rate  $\mu_W$ starting  with  $Y(0) = w \in \N$  individuals.  It is  assumed  that
$(\sigma_n)$ is  some fixed non-decreasing  sequence.  It will  be shown that  the process
$(W(t))$ obtained by killing one individual  of $(Y(t))$ at each of the successive instants
$(\sigma_n)$  survives  with  positive  probability  when the  series  with  general  term
$(\exp(-\mu_W \sigma_n))$ converges.

We denote
\[
\kappa = \inf\{n \geq 1: W(\sigma_n)=0\}.
\]
The process $(W(t))$ can be represented by
\begin{equation} \label{eq:W}
W(t)=Y(t)-\sum_{i=1}^{\kappa} X_i(t)\ind{\sigma_i\leq t},
\end{equation}
where, for $1\leq i\leq \kappa$ and $t\geq \sigma_i$,  $X_i(t)$ is the total number of
children at time $t$ in the original Yule process of the $i$th individual killed at time
$\sigma_i$.  
In terms of trees, $(W(t))$ can be seen as a subtree of $(Y(t))$: for $1 \leq i \leq
\kappa$, $(X_i(t))$ is the subtree of $(Y(t))$ associated with  the $i$th particle killed  at time
$\sigma_i$.  

It is easily checked that 
$(X_i(t-\sigma_i), t\geq \sigma_i)$ is a Yule 
process  starting with one individual and,  since a killed individual cannot have one of his
descendants killed, that the processes 
\[
(\widetilde{X}_i(t))=(X_i(t+\sigma_i), t\geq 0), \quad 1 \leq i \leq \kappa,
\]  
are  independent Yule processes. 

For any process $(U(t))$, one denotes
\begin{equation}
(M_{U}(t))\stackrel{\text{def.}}{=} \left(e^{-\mu_W t} U(t)\right).
\end{equation}
If $(\widetilde{X}(t))$ is a Yule process with rate $\mu_W$,  the martingale $(M_{\widetilde{X}}(t))$
converges almost surely and in $L_2$ to a random variable $M_{\widetilde{X}}(\infty)$ with an
exponential distribution with mean $\widetilde{X}(0)$, and by Doob's Inequality
\[
\E\left(\sup_{t\geq 0} M_{\widetilde{X}}(t)^2 \right)\leq 2\sup_{t\geq 0} \E\left(M_{\widetilde{X}}(t)^2\right)<+\infty.
\]
See Athreya and Ney~\cite{Athreya72:0}. Consequently
\[ 
e^{-\mu_W t}W(t)= M_Y(t)-\sum_{i=1}^{\kappa} e^{-\mu_W \sigma_i} M_{\widetilde{X}_i}(t-\sigma_i)\ind{\sigma_i\leq t},
\]
and for any $t\geq 0$, 
\[
\sum_{i=1}^{\kappa} e^{-\mu_W \sigma_i} M_{\widetilde{X}_i}(t-\sigma_i)\ind{\sigma_i\leq  t}\leq 
\sum_{i=1}^{\kappa} e^{-\mu_W \sigma_i} \sup_{s\geq 0}M_{\widetilde{X}_i}(s).
\]
Assume now that $\sum_{i \geq 1} e^{-\mu_W \sigma_i} < +\infty$: then the last expression
is integrable, and Lebesgue's Theorem implies that  $(M_{W}(t)) = (\exp(-\mu_W t)W(t))$ converges almost
surely and in $L_2$ to 
\[
M_W(\infty)=M_Y(\infty)-\sum_{i=1}^{\kappa} e^{-\mu_W \sigma_i}M_{\widetilde{X}_i}(\infty).
\]
Clearly, for some $w^*$ large enough and then for any $w \geq w^*$, one has 
\[
\E_w(M_W(\infty))\geq w - \sum_{i=1}^{+\infty} e^{-\mu_W \sigma_i} > 0,
\]
in particular $\P_w(M_W(\infty)>0)>0$ and $\P_w(W(t)\geq 1, \forall t\geq 0)>0$. If $Y(0) = w < w^*$ and $\sigma_1>0$, then $\P_w(Y(\sigma_1)\geq w^* + 1) > 0$ and therefore, by translation
at time $\sigma_1$, the same conclusion holds when the sequence $(\exp(-\mu_W
\sigma_i))$ has a finite sum. The following proposition has thus been proved. 
\begin{proposition} \label{KillProp}
Let $(W(t))$ be a process growing as a Yule process with rate $\mu_W$ and for
which individuals are killed at non-decreasing instants $(\sigma_n)$ with $\sigma_1>0$. If
\[
\sum_{i=1}^{+\infty} e^{-\mu_W \sigma_i}<+\infty,
\]
then as $t$ gets large, and for any $w \geq 1$, the variable $(\exp(-\mu_W t)W(t))$ converges $\P_w$-almost surely and in $L_2$
to a finite random variable $M_W(\infty)$ such that $\P_w(M_W(\infty)>0)>0$.
\end{proposition}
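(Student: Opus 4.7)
\medskip
\noindent\textbf{Proof plan.} The representation (\ref{eq:W}) already decomposes $W(t)$ as the underlying Yule process minus the subtrees rooted at the killed individuals. My plan is to multiply through by $e^{-\mu_W t}$, pass to the limit term by term using the martingale convergence theorem for Yule processes, and then use a simple first-moment computation to conclude positivity.

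First I would collect the building blocks. The process $(Y(t))$ is an ordinary Yule process with rate $\mu_W$, so $(M_Y(t))=(e^{-\mu_W t}Y(t))$ is a nonnegative $L_2$-bounded martingale converging a.s.\ and in $L_2$ to $M_Y(\infty)$ with $\E_w(M_Y(\infty))=w$. As recorded in the excerpt, the shifted subtrees $(\widetilde{X}_i(t))=(X_i(t+\sigma_i))$ are i.i.d.\ rate-$\mu_W$ Yule processes started from one individual, and each satisfies $\E(\sup_{s\geq 0} M_{\widetilde{X}_i}(s)^2)<\infty$ by Doob's $L_2$ inequality. Writing
\[
e^{-\mu_W t}W(t) = M_Y(t) - \sum_{i=1}^{\kappa} e^{-\mu_W \sigma_i}\, M_{\widetilde{X}_i}(t-\sigma_i)\,\ind{\sigma_i\leq t},
\]
the $i$-th summand is bounded in absolute value by $e^{-\mu_W \sigma_i}\sup_{s\geq 0} M_{\widetilde{X}_i}(s)$. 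Taking expectations and using the summability hypothesis $\sum_i e^{-\mu_W \sigma_i}<\infty$ together with the uniform bound $\E(\sup_s M_{\widetilde{X}_i}(s))\leq C$ gives an integrable dominating random variable. Dominated convergence (both almost sure and in $L_2$, since the squared envelope is likewise summable by independence and the $L_2$-bound on $\sup_s M_{\widetilde{X}_i}(s)$) then yields a.s.\ and $L_2$ convergence of $(M_W(t))$ to
\[
M_W(\infty) = M_Y(\infty) - \sum_{i=1}^{\kappa} e^{-\mu_W \sigma_i}\, M_{\widetilde{X}_i}(\infty).
\]

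For the positivity of $\P_w(M_W(\infty)>0)$, I would simply take expectations. Since $\E(M_Y(\infty))=w$ and each $\E(M_{\widetilde{X}_i}(\infty))=1$, one gets $\E_w(M_W(\infty))\geq w-\sum_{i\geq 1}e^{-\mu_W \sigma_i}$, which is strictly positive as soon as $w\geq w^*$ for some threshold $w^*$. Hence $\P_w(M_W(\infty)>0)>0$, and in particular the event $\{W(t)\geq 1 \text{ for all } t\geq 0\}$ has positive probability. The case $w<w^*$ is handled by the standing assumption $\sigma_1>0$: with positive probability $(Y(t))$ reaches any level $w^*+1$ before time $\sigma_1$ (no killing has occurred yet), and conditioning on this event and applying the strong Markov property at $\sigma_1$ reduces to the previous case.

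The only mildly delicate point is the $L_2$ convergence, where I need to justify summability of the martingale differences in the $L_2$ norm, not merely in $L_1$; this is handled by the same domination argument, since $\sum_i e^{-\mu_W \sigma_i}\sup_s M_{\widetilde{X}_i}(s)$ has finite $L_2$ norm (its $L_2$ norm is bounded by $(\sum_i e^{-\mu_W \sigma_i})\,\|\sup_s M_{\widetilde{X}_1}(s)\|_2$). Everything else is a straightforward manipulation of the decomposition (\ref{eq:W}).
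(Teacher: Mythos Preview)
Your proposal is correct and follows essentially the same approach as the paper's own argument: the same decomposition via~(\ref{eq:W}), the same domination of the sum by $\sum_i e^{-\mu_W\sigma_i}\sup_{s\geq 0} M_{\widetilde{X}_i}(s)$ to pass to the limit a.s.\ and in $L_2$, the same first-moment bound $\E_w(M_W(\infty))\geq w-\sum_i e^{-\mu_W\sigma_i}$ for positivity when $w$ is large, and the same reduction to this case via the event $\{Y(\sigma_1)\geq w^*+1\}$ when $w$ is small. The only cosmetic difference is that you spell out the $L_2$ domination a bit more explicitly than the paper does.
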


The previous proposition establishes  the minimal results needed in Section~\ref{sec:net}.
However, Kolmogorov's  Three-Series, see  Williams~\cite{Williams91:0}, can be
used in conjunction with  Fatou's Lemma to show that $(W(t))$ dies  out almost surely when
the series with general term $(\exp(-\mu_W \sigma_n))$ diverges.

\section{Interacting Branching Processes} \label{sec:Interacting}

In this section we study the Yule process of the previous section when killing times
correspond to the birth times of some other branching process. The other branching process
can be seen as a renewing Bellman-Harris process. 

\subsection*{Renewing Bellman-Harris process}

In the rest of this section, $\mu_Z, \nu > 0$ are fixed and $(Z(t))$ is a birth-and-death
process whose $Q$-matrix $Q_Z$ is given by \begin{equation}\label{eqZ} q_Z(z,z+1)=\mu_Z
  (z\vee 1)\ \text{ and }\ q_Z(z,z-1)=\nu z.  
\end{equation}
In the rest of the paper $n \vee m$ denotes $\max(n,m)$ for $n$, $m \in \N$. This process
can be described equivalently as a time-changed $M/M/1$ queue (see
Proposition~\ref{prop:queueing}) or as a sequence of independent branching processes (see
Proposition~\ref{prop:branching}). The time-change is the discrete analog of the Lamperti
transform between continuous-state branching processes and L\'evy processes, see
Lamperti~\cite{Lamperti67:1}. As it will be seen these two viewpoints are complementary. 

Let $(\sigma_n)$ be the sequence of birth instants (i.e., positive jumps) of $(Z(t))$ and
$(B_\sigma(t))$ the corresponding counting process of $(\sigma_n)$, for $t \geq 0$,  
\[
B_\sigma(t) =\sum_{i\geq 1}\ind{\sigma_i \leq t}.
\]
\begin{proposition}[Queueing Representation] \label{prop:queueing}
If $Z(0) = z \in \N$, then 
\begin{equation}\label{aux}
(Z(t), t\geq 0 )\stackrel{\text{dist.}}{=} \left(L(C(t)), t\geq 0\right),
\end{equation}
where $(L(t))$ is the process of the number of jobs of an $M/M/1$ queue with input rate
$\mu_Z$ and service rate $\nu$ and with $L(0) = z$ and
$C(t)=\inf\left\{s>0:A(s)>t \right\}$, where
\[
A(t)=\int_0^t\frac{1}{1\vee L(u)}\,du .
\]
\end{proposition}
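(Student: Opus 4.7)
The plan is to exploit a Lamperti-type change of time: the $M/M/1$ queue $(L(t))$ has state-independent jump rates $\mu_Z$ and $\nu$ (up to the boundary at $0$), and multiplying all its infinitesimal rates by the speed factor $1\vee L(t)$ converts them into $\mu_Z(z\vee 1)$ and $\nu z$, which are exactly the entries of $Q_Z$. Running $L$ through the time change $C$ implements this speedup, so the goal is to check that $(L(C(t)))$ is the pure-jump Markov process with $Q$-matrix $Q_Z$ and initial state $z$, which by uniqueness of the minimal chain yields the equality in distribution.

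Before anything else I would verify that $A$ and $C$ are well defined on $[0,\infty)$. The $M/M/1$ queue is non-explosive and dominated by $L(0)+N(u)$ with $N$ a Poisson process of rate $\mu_Z$, so $A$ is finite and absolutely continuous on compacts. Because $1\vee L(u)\geq 1$, the map $A$ is strictly increasing, and the pathwise linear bound on $L$ yields $A(s)\to\infty$ almost surely, so $C$ is continuous, strictly increasing, and defined on all of $[0,\infty)$.

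The main step is to identify the jump structure of $\widetilde Z(t)=L(C(t))$. Let $(\tau_n)_{n\geq 0}$ denote the successive jump times of $(L(t))$, with $\tau_0=0$. Since $L$ is constant on each $[\tau_n,\tau_{n+1})$, so is $\widetilde Z$ on $[A(\tau_n),A(\tau_{n+1}))$, and the jumps of $\widetilde Z$ occur at the times $A(\tau_n)$. The key computation is that, on $\{L(\tau_n)=z\}$,
\[
A(\tau_{n+1})-A(\tau_n)=\int_{\tau_n}^{\tau_{n+1}}\frac{du}{1\vee L(u)}=\frac{\tau_{n+1}-\tau_n}{1\vee z},
\]
and since $\tau_{n+1}-\tau_n$ is exponential with parameter $\mu_Z+\nu\ind{z\geq 1}$ (conditionally on the past up to $\tau_n$), the sojourn of $\widetilde Z$ in state $z$ is exponential with parameter $(\mu_Z+\nu\ind{z\geq 1})(1\vee z)$. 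The direction of the jump is independent of the sojourn and equal, for $z\geq 1$, to $+1$ with probability $\mu_Z/(\mu_Z+\nu)$ and to $-1$ with probability $\nu/(\mu_Z+\nu)$, and equal to $+1$ deterministically when $z=0$. Splitting the total exit rate according to these probabilities recovers $q_Z(z,z+1)=\mu_Z(z\vee 1)$ and $q_Z(z,z-1)=\nu z$.

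Finally, applying the strong Markov property of $(L(t))$ at each $\tau_n$, the sequence of sojourn times and jump marks of $\widetilde Z$ consists of conditionally independent variables with the exponential and categorical laws identified above. This is exactly the construction of the minimal continuous-time Markov chain with $Q$-matrix $Q_Z$ started at $z$, and by uniqueness of this chain one obtains $(L(C(t)),t\geq 0)\stackrel{\text{dist.}}{=}(Z(t),t\geq 0)$. I expect the only real subtlety to be the non-accumulation of jumps under the time change, i.e., checking $A(s)\to\infty$; this reduces to the linear-growth bound on $L$ mentioned above and causes no genuine difficulty.
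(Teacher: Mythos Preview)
Your proof is correct and follows essentially the same Lamperti-type time-change idea as the paper. The paper identifies the $Q$-matrix of $L(C(\cdot))$ by a direct computation of infinitesimal transition probabilities, using $\E(C(h))=(z\vee 1)h+o(h)$, whereas you identify the chain through its embedded jump structure (holding-time distributions and jump probabilities); both are standard and equivalent ways to recognise a continuous-time Markov chain, and your version is somewhat more explicit about the Markov property and non-explosion, which the paper simply asserts.
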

\begin{proof}
It is not difficult to check that the process
$(M(t))\stackrel{\text{def.}}{=}\left(L(C(t))\right)$ has the Markov property. Let $Q_M$
be its $Q$-matrix. For $z\geq 0$,
\[
\P(L(C(h))=z+1\mid L(0)=z)=\mu_Z \E(C(h))+o(h)=\mu_Z (z\vee 1)h+ o(h),
\]
hence $q_M(z,z+1)=\mu_Z(z\vee 1)$. Similarly $q_M(z,z-1)=\nu z$. The proposition is proved.  
\end{proof}

\begin{corollary}\label{cor:series}
	For any $\gamma > (\mu_Z - \nu) \vee 0$ and  $z = Z(0) \in \N$,
	\begin{equation} \label{eq:series}
	\E_z\left(\sum_{n=1}^{+\infty} e^{-\gamma \sigma_n}\right) < +\infty.
	\end{equation}
\end{corollary}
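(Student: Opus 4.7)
The plan is to reduce the claim to an integrability property of the mean $\E_z[Z(t)]$ via a Fubini-type identity, and then control that mean by a Gronwall-type estimate using the generator of $(Z(t))$.

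First I would use the elementary identity
\[
\sum_{n=1}^{\infty} e^{-\gamma \sigma_n} = \gamma \int_0^\infty e^{-\gamma t} B_\sigma(t)\,dt,
\]
which follows by noting that $\gamma \int_0^\infty e^{-\gamma t} \ind{\sigma_n \leq t}\,dt = e^{-\gamma \sigma_n}$ and summing over $n$. Taking expectations and applying Fubini (all quantities being non-negative) gives
\[
\E_z\left[\sum_{n=1}^\infty e^{-\gamma \sigma_n}\right] = \gamma \int_0^\infty e^{-\gamma t}\,\E_z[B_\sigma(t)]\,dt.
\]
Since the predictable compensator of $(B_\sigma(t))$ is $\int_0^t \mu_Z(Z(s)\vee 1)\,ds$, one obtains
\[
\E_z[B_\sigma(t)] = \mu_Z \int_0^t \E_z[Z(s)\vee 1]\,ds \leq \mu_Z t + \mu_Z \int_0^t \E_z[Z(s)]\,ds.
\]

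The key step is then to bound $f(s) := \E_z[Z(s)]$. Dynkin's formula together with $Z(s)\vee 1 \leq 1+Z(s)$ yields $f'(s) \leq \mu_Z + (\mu_Z-\nu) f(s)$, and a direct integration shows, writing $\alpha=(\mu_Z-\nu)\vee 0$, that $f(s) \leq C_z (1+s)\, e^{\alpha s}$ holds uniformly in $s\geq 0$: when $\mu_Z>\nu$ one obtains $f(s) \leq (z+\mu_Z/\alpha)e^{\alpha s}$; when $\mu_Z=\nu$, $f(s) \leq z+\mu_Z s$; and when $\mu_Z<\nu$, $f$ is uniformly bounded by $z+\mu_Z/(\nu-\mu_Z)$. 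Plugging back, $\E_z[B_\sigma(t)] \leq C'_z(1+t^2)e^{\alpha t}$, hence the integral
\[
\gamma \int_0^\infty e^{-\gamma t}(1+t^2)e^{\alpha t}\,dt
\]
converges as soon as $\gamma > \alpha = (\mu_Z - \nu) \vee 0$, which is precisely the assumption.

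The only real obstacle is to justify the use of Dynkin's formula and the compensator identity for the unbounded function $z\mapsto z$, both of which require some a priori integrability of $(Z(t))$ and $(B_\sigma(t))$. This can be taken care of by the pathwise coupling $Z(t) \leq \widehat Z(t)$, where $(\widehat Z(t))$ is a pure-birth Yule process of rate $\mu_Z$ started from $z\vee 1$: $\widehat Z$ is non-explosive, satisfies $\E_z[\widehat Z(t)]=(z\vee 1)e^{\mu_Z t}$, and dominates both $Z(t)$ and $B_\sigma(t)$, legitimizing Fubini and upgrading the local martingale $B_\sigma(t)-\int_0^t\mu_Z(Z(s)\vee 1)\,ds$ to a genuine one. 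As an alternative route, one could read the same bound on $\E_z[Z(t)]$ off the queueing representation of Proposition~\ref{prop:queueing}, estimating instead the number of arrivals of the $M/M/1$ queue run over the time-changed clock.
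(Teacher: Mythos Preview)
Your proof is correct, and in fact after one more Fubini your double integral collapses to the paper's identity
\[
\E_z\Bigl(\sum_{n\geq 1} e^{-\gamma\sigma_n}\Bigr)=\mu_Z\int_0^\infty e^{-\gamma u}\,\E_z\bigl(Z(u)\vee 1\bigr)\,du,
\]
so the two arguments are ultimately the same computation. The difference is in how this identity is reached: the paper goes through the queueing representation of Proposition~\ref{prop:queueing}, writing $(\sigma_n)\stackrel{\text{dist.}}{=}(A(t_n))$ for the arrival times $(t_n)$ of the $M/M/1$ queue and then invoking the stochastic-integral identity $\E\int e^{-\gamma A(s)}\,{\cal N}_{\mu_Z}(ds)=\mu_Z\int e^{-\gamma A(s)}\,ds$ together with the change of variables $u=A(s)$; you instead stay with the original process, use $\sum e^{-\gamma\sigma_n}=\gamma\int e^{-\gamma t}B_\sigma(t)\,dt$, and read off $\E_z[B_\sigma(t)]$ from the compensator. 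Your route is more self-contained (no need for Proposition~\ref{prop:queueing} or Poisson stochastic calculus), and your Yule-process domination is a clean way to justify the integrability issues; the paper's route has the virtue of illustrating the time-change duality. Both then finish identically, bounding $\E_z[Z(t)]$ by Kolmogorov's equation and Gronwall --- exactly the alternative you mention in your last sentence.
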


\begin{proof}
Proposition~\ref{prop:queueing} shows that, in particular, the sequences of positive jumps
of  $(Z(t))$   and  of   $(L(C(t)))$  have  the   same  distribution.  Hence,   if  ${\cal
  N}_{\mu_Z}=(t_n)$ is  the arrival  process of  the $M/M/1$ queue,  a Poisson  process with
parameter   $\mu_Z$,   then,   with   the   notations   of   the   above   proposition,   the
relation   $$(\sigma_n)\stackrel{\text{dist.}}{=}(A(t_n))$$  holds.   By   using  standard
martingale  properties of  stochastic integrals  with  respect to  Poisson processes,  see
Rogers and Williams~\cite{Rogers2}, one gets for $t\geq 0$,
\begin{align}
\E_z\left(\sum_{n  \geq 1}  e^{-\gamma  A(t_n)}\right)&=\E_z\left(\int_0^\infty e^{-\gamma
  A(s)}   {\cal   N}_{\mu_Z}(ds)\right)=   \mu_Z\E_z\left(\int_0^\infty  e^{-\gamma   A(s)}   \,
ds\right)\notag  \\  &=   \mu_Z\int_0^\infty  e^{-\gamma  u}\E_z\left(Z(u)\vee  1  \right)\,
du,\label{auxj}
\end{align}
where Relation~\eqref{aux} has been used for the last equality. 
Kolmogorov's equation for the process  $(Z(t))$ gives that
\begin{align*}
\phi(t)\stackrel{\text{def.}}{=}\E_z(Z(t))&=\mu_Z\int_0^t     \E_z\left(Z(u)\vee    1
\right)\,du -\nu\int_0^t \E_z\left(Z(u)\right)\,du\\
&\leq (\mu_Z-\nu)\int_0^t \phi(u)\,du +\mu_Z t,
\end{align*}
therefore, by Gronwall's Lemma, 
	\[
	\phi(t)\leq \phi(0)+ \mu_Z \int_0^t ue^{(\mu_Z-\nu)u}\,du \leq z + \frac{\mu_Z}{\mu_Z-\nu} t e^{(\mu_Z-\nu)t}.
	\]
 From Equation~\eqref{auxj}, one concludes that
	\[
	\E_z\left(\sum_{n} e^{-\gamma \sigma_n}\right)=
	\E_z\left(\sum_{n} e^{-\gamma A(t_n)}\right)<+\infty. 
	\]
	The proposition is proved.
\end{proof}

Before  hitting   $0$,     $(Z(t))$   can  be  seen  a  Bellman-Harris  branching  process with Malthusian   parameter  $\alpha   =   \mu_Z  -\nu$, see  Athreya   and
Ney~\cite{Athreya72:0}. This Bellman-Harris branching process describes  the evolution of  a population of
independent particles: each particle, at rate $\lambda \stackrel{\text{def.}}{=} \mu_Z + \nu$, either splits into two  particles with probability  $p \stackrel{\text{def.}}{=}
\mu_Z  /  (\mu_Z  +   \nu)$  or  dies with probability $1-p$.  These  processes  will  be   referred  to  as    $(p,
\lambda)$-branching processes in the sequel.

A $(p, \lambda)$-branching process survives with positive probability only when $p > 1/2$,
in which case the probability of extinction $q$ is equal to $q = (1-p)/p = \nu / \mu_Z$. The process $(Z(t))$ only differs from a $(p,\lambda)$-branching process insofar it regenerates after hitting
$0$, after a time exponentially distributed. When it regenerates, it again behaves as a $(p,\lambda)$-branching process (started
with one particle), until it hits~$0$ again.  

\begin{proposition}[Branching Representation] \label{prop:branching}
If $Z(0) = z \in \N$ and  $(\widetilde Z(t))$ is a $(p,\lambda)$-branching
process started with $z \in \N$ particles and $\widetilde T$ its extinction time, then 
\[
(Z(t), 0 \leq t \leq T) \stackrel{\text{dist.}}{=} (\widetilde Z(t), 0 \leq t \leq \widetilde T),
\]
where $T = \inf\{t \geq 0: Z(t) = 0\}$ is the hitting time of $0$ by $(Z(t))$.
\end{proposition}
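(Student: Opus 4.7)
The plan is to compare the infinitesimal generators of the two processes on $\{z \geq 1\}$ and conclude by uniqueness of a Markov jump process with prescribed $Q$-matrix and initial distribution. Before extinction, both processes are continuous-time Markov chains on the positive integers, so the only work is to verify that their transition rates coincide on $\{z \geq 1\}$.

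First I would compute the jump rates of $(\widetilde Z(t))$. By construction and the branching property, when $\widetilde Z(t) = z \geq 1$ the $z$ independent particles each wait an exponential time of parameter $\lambda$ before acting, and then independently split with probability $p$ or die with probability $1-p$. Superposing these independent clocks, the process jumps from $z$ to $z+1$ at rate $z\lambda p = z(\mu_Z + \nu)\cdot \mu_Z/(\mu_Z+\nu) = \mu_Z z$, and from $z$ to $z-1$ at rate $z\lambda(1-p) = \nu z$.

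Next I would compare with the $Q$-matrix of $(Z(t))$ given in Equation~\eqref{eqZ}. For $z \geq 1$ one has $z \vee 1 = z$, hence $q_Z(z,z+1) = \mu_Z z$ and $q_Z(z,z-1) = \nu z$, which are exactly the rates found for $(\widetilde Z(t))$. Both chains start at the same state $z \in \N$ (and the case $z = 0$ is trivial since $T = \widetilde T = 0$ and both processes are identically zero on $[0,T]$).

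Since two continuous-time Markov chains with the same initial distribution and the same $Q$-matrix are equal in distribution, and since we stop each process the first time it reaches $0$, the behaviour of $(Z(t))$ at state $0$ (where $q_Z(0,1) = \mu_Z$ triggers a regeneration, while the branching process is absorbed) is irrelevant. This yields the claimed equality in law of $(Z(t), 0 \leq t \leq T)$ and $(\widetilde Z(t), 0 \leq t \leq \widetilde T)$. There is no genuine obstacle here: the content of the proposition is conceptual, repackaging the birth-and-death chain $(Z(t))$ as a branching process so that results such as Corollary~\ref{cor:series} and classical extinction theory for $(p,\lambda)$-branching processes can be applied in the sequel.
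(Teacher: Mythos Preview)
Your argument is correct: comparing the $Q$-matrices on $\{z\geq 1\}$ and invoking uniqueness of the law of a Markov jump process with given generator and initial state is exactly the right justification. The paper itself does not supply a proof of this proposition; it is stated as an immediate consequence of the definition of a $(p,\lambda)$-branching process together with the $Q$-matrix~\eqref{eqZ}, and your write-up spells out precisely that verification. One small remark: your closing sentence cites Corollary~\ref{cor:series} as an application of this branching viewpoint, but that corollary is in fact proved earlier via the queueing representation of Proposition~\ref{prop:queueing}; the branching representation is used instead for Corollary~\ref{cor:Z(infty)} and in the appendix.
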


\begin{corollary}\label{cor:Z(infty)}
	Suppose that $\mu_Z > \nu$. Then $\P_z$-almost surely for any $z \geq 0$, there
        exists a finite random variable $Z(\infty)$ such that, 
	\[
	\lim_{t \to +\infty}  e^{-(\mu_Z-\nu)t}Z(t)  = Z(\infty)\ \text{ and }\ Z(\infty) > 0.
	\]         
\end{corollary}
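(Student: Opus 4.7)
The plan is to combine the regeneration structure given by Proposition~\ref{prop:branching} with the classical martingale convergence theorem for supercritical Markov branching processes.

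First, I would apply the strong Markov property at the successive visits of $(Z(t))$ to $0$ to decompose it into independent $(p,\lambda)$-branching excursions separated by i.i.d.\ exponential sojourns at $0$ of rate $\mu_Z$. Each excursion subsequent to the first visit to $0$ starts from one particle and survives forever with probability $1-q = 1-\nu/\mu_Z > 0$ (the initial excursion survives with probability $1-q^z$ when $z\geq 1$). By independence, the index of the first surviving excursion is almost surely finite; denote by $T^* < +\infty$ its starting time and by $\widetilde{Z}^*$ the excursion itself.

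Next, I would invoke the classical fact that for a $(p,\lambda)$-branching process $(\widetilde{Z}(t))$, setting $\alpha = \mu_Z - \nu$, the process $(e^{-\alpha t}\widetilde{Z}(t))$ is a nonnegative martingale, bounded in $L^2$ thanks to the finite second moment of the binary offspring distribution, and therefore converges almost surely and in $L^2$ to a limit $M_\infty$ with $\E(M_\infty) = \widetilde{Z}(0)$. Applied to $\widetilde{Z}^*$, this yields
\[
e^{-\alpha t}Z(t) = e^{-\alpha T^*}\,e^{-\alpha(t-T^*)}\widetilde{Z}^{*}(t-T^*) \;\longrightarrow\; e^{-\alpha T^*} M^*_\infty \;=:\; Z(\infty)
\]
almost surely as $t\to+\infty$, with $Z(\infty)$ finite.

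The main obstacle is to upgrade this to $Z(\infty) > 0$ almost surely, which reduces to showing that $M^*_\infty > 0$ almost surely on the surviving excursion. This is a Kesten--Stigum-type positivity statement; for the $(p,\lambda)$-branching process it admits a short direct proof via the branching property. Decomposing $M_\infty$ at the first jump time of a single-particle process (noting $e^{-\alpha\tau}>0$ a.s.) gives $\P(M_\infty = 0) = (1-p) + p\,\P(M_\infty=0)^{2}$, whose only solutions in $[0,1]$ are $1$ and $(1-p)/p = q$; since $\E(M_\infty) = 1$ rules out $M_\infty \equiv 0$, one gets $\P(M_\infty = 0) = q$. As $\{M_\infty=0\}$ trivially contains the extinction event, the two events coincide almost surely, so $M^*_\infty > 0$ on the surviving excursion, and therefore $Z(\infty) > 0$ almost surely.
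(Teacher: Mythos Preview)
Your proof is correct and follows essentially the same route as the paper: both use the regeneration structure of Proposition~\ref{prop:branching} to couple $(Z(t))$ after a finite time with a surviving supercritical $(p,\lambda)$-branching process, and then appeal to the martingale limit for that process. The only difference is that the paper cites Nerman~\cite{Nerman81:0} for the positivity of the limit on non-extinction, whereas you supply a self-contained Kesten--Stigum-type argument via the fixed-point equation $\P(M_\infty=0)=(1-p)+p\,\P(M_\infty=0)^2$; this is a nice elementary touch but not a different strategy.
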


\begin{proof}
	When $\mu_Z > \nu$, the process $(Z(t))$ couples in finite time with a supercritical $(p,
        \lambda)$-branching process $(\widetilde Z(t))$ conditioned on non-extinction; this follows readily from
        Proposition~\ref{prop:branching} (or see the Appendix for details). Since for any
        supercritical $(p, \lambda)$-branching process, $(\exp(-(\mu_Z-\nu) t)\widetilde Z(t))$ converges almost surely to a finite
        random variable $\widetilde Z(\infty)$, positive on the event of non-extinction
        (see Nerman~\cite{Nerman81:0}), one gets the desired result.
\end{proof}

Due to its technicality, the proof of the following result is postponed to the Appendix;
this result is used in the proof of Proposition~\ref{prop:Z(H0)}. 

\begin{proposition}\label{lemma:births}
Suppose that $\mu_Z > \nu$. If
\begin{equation} \label{eq:eta}
	\eta^*(x) = \frac{2 - x - \sqrt{x(4-3x)}}{2(1-x)},\ 0 < x < 1,
\end{equation}
then for any $0 < \eta < \eta^*(\nu/\mu_Z)$,
\[ \sup_{z \geq 0} \left[ \E_z \left( \sup_{t \geq \sigma_1} \left( e^{\eta (\mu_Z - \nu) t} B_\sigma(t)^{-\eta} \right) \right) \right] < +\infty. \]
\end{proposition}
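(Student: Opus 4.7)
Write $\alpha = \mu_Z - \nu$. The process $B_\sigma$ is integer-valued, non-decreasing, and jumps by one at each $\sigma_k$; together with the continuity and monotonicity of $e^{\eta\alpha t}$ this gives
\[
\sup_{t\geq \sigma_1} e^{\eta\alpha t}\,B_\sigma(t)^{-\eta} \;=\; \sup_{k\geq 1} \frac{e^{\eta\alpha \sigma_{k+1}}}{k^{\eta}},
\]
so the task becomes bounding the expectation of this discrete supremum uniformly in $z$.

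The strategy is to construct a non-negative supermartingale of the form $M(t) = e^{\eta\alpha t}\,\Phi(B_\sigma(t), Z(t))^{-\eta}$ with $\Phi(b,z) \geq b$, and then apply Doob's maximal inequality starting at time $\sigma_1$. The generator of $(B_\sigma, Z)$ acts by birth events (rate $\mu_Z(z\vee 1)$, $(b,z)\mapsto (b+1, z+1)$) and death events (rate $\nu z$, $(b,z)\mapsto (b, z-1)$). With an affine ansatz $\Phi(b,z) = b + a z + c$, applying the generator to $M$ and expanding the increments of $\Phi^{-\eta}$ to first order yields a drift condition of the schematic form
\[
\eta\alpha \;\leq\; \eta\mu_Z(z\vee 1)(1+a)/\Phi \;-\; \eta\nu z\cdot a/\Phi \;+\; O(1/\Phi^2).
\]
Requiring non-positivity for all admissible $(b,z)$, optimising over $a$ and $c$, and using that in the typical regime $B_\sigma(t)/Z(t) \to \mu_Z/\alpha$ (a consequence of $B_\sigma(t) \sim \mu_Z\int_0^t Z(u)\,du$ together with Corollary~\ref{cor:Z(infty)}) should reduce the condition, after some algebra, to the quadratic inequality $(1-q)\eta^2 - (2-q)\eta + (1-q) \leq 0$ with $q = \nu/\mu_Z$, whose smaller root is precisely $\eta^*(q)$. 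Since $B_\sigma(\sigma_1)=1$ and $Z(\sigma_1)\leq z+1$, the initial value $\E_z[M(\sigma_1)]$ is uniformly bounded in $z$; via $\Phi\geq B_\sigma$, the supermartingale estimate then translates into the claimed bound.

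The main obstacle is the precise identification of the supermartingale. Because deaths decrease $Z$ but leave $B_\sigma$ untouched, neither $B_\sigma^{-\eta}$ alone nor $Z^{-\eta}$ alone is a supermartingale: only a carefully weighted combination works, and the constants $a, c$ must be tuned so that the birth-gain and the death-loss cancel exactly at the critical exponent, producing the polynomial $(1-q)\eta^2 - (2-q)\eta + (1-q)=0$. As a sanity check, at $q=0$ this quadratic reduces to $(\eta-1)^2 = 0$, recovering the familiar Yule threshold $\eta < 1$ below which $\E[E^{-\eta}]$ is finite for an exponential variable $E$. An alternative route would combine the queueing representation of Proposition~\ref{prop:queueing}, which gives $B_\sigma(t) = {\cal N}_{\mu_Z}(C(t))$, with the Laplace-type estimates used in Corollary~\ref{cor:series}, reducing the supremum to a computation on a Poisson process subject to a random time change.
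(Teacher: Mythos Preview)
Your plan has a genuine gap: the supermartingale you are looking for does not exist in the affine class $\Phi(b,z)=b+az+c$. To see why, fix $z\geq 1$ and let $b\to\infty$. Births happen at rate $\mu_Z z$, each one decreasing $\Phi^{-\eta}$ by at most $\eta(1+a)\Phi^{-\eta-1}$, so the downward drift is of order $\eta\mu_Z z(1+a)\Phi^{-\eta-1}=O(z/b)\cdot\Phi^{-\eta}$, while the $e^{\eta\alpha t}$ factor contributes the positive drift $\eta\alpha\,\Phi^{-\eta}$. For $b\gg z$ the latter dominates no matter how you choose $a$ and $c$, so $\mathcal L M>0$ on an unbounded set and $M$ is not a supermartingale. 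Your appeal to ``the typical regime $B_\sigma(t)/Z(t)\to\mu_Z/\alpha$'' does not rescue this: the supermartingale property must hold for \emph{all} states, not just typical ones. In fact already in the pure Yule case ($\nu=0$, $\Phi=Y$) the process $e^{\eta\alpha t}Y(t)^{-\eta}$ is a \emph{sub}martingale, being a convex function of the martingale $e^{-\alpha t}Y(t)$; adding deaths of $Z$ (which increase $\Phi^{-\eta}$) only pushes further in the submartingale direction. A second issue: for non-negative supermartingales Doob's maximal inequality yields only the tail bound $\P(\sup_t M_t\geq x)\leq \E M_0/x$, which is not enough to conclude $\E[\sup_t M_t]<\infty$.

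The paper proceeds quite differently. After reducing to $z=0$ and then to $\E_1[\sup_{t\geq 0}(e^{\eta\alpha t}(B_\sigma(t)+1)^{-\eta})]$, it conditions on the last time $\tau$ at which $Z$ visits $0$. On $\{\tau=+\infty\}$ the process $(Z(t))$ is a supercritical branching process conditioned on survival, hence dominates a Yule process $(Y(t))$ of rate $\alpha$; since $e^{\eta\alpha t}Y(t)^{-\eta}$ is a non-negative \emph{sub}martingale, Doob's $L_p$ inequality (with $p=1/\eta>1$) reduces the problem to bounding $\sup_t e^{\eta\alpha t}\E_1[Y(t)^{-\eta}]$, which is an explicit geometric computation. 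On $\{\tau<+\infty\}$ one writes $\tau=\sum_{k=1}^{1+G}(T_k+E_{\mu_Z,k})$ with $G$ geometric with parameter $q=\nu/\mu_Z$ and $T_k$ the extinction time of a subcritical branching process; a Gronwall bound gives $\E[e^{\eta\alpha T_1}]\leq 1/(1-\eta)$, and the threshold $\eta^*(q)$ is exactly the value at which the resulting geometric series $\E[\gamma(\eta)^G]$ starts to diverge, i.e.\ the smaller root of $(1-q)\eta^2-(2-q)\eta+(1-q)=0$. So you recovered the correct quadratic, but it arises from the excursion decomposition rather than from a global Lyapunov drift.
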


\subsection*{A Yule Process Killed at the Birth Instants of a Renewing Bellman-Harris  Process}

We now consider the main interacting branching processes: in addition to $(Z(t))$, one  considers an independent Yule process $(Y(t))$  with parameter  $\mu_W$ (its $Q$-matrix is defined  by Relation~\eqref{Yuledef} with $\mu = \mu_W$). We proceed similarly as in Section~\ref{sec:Yule}: a process $(W(t))$ is defined 
by killing one individual of $(Y(t))$ at each of  the  birth instants $(\sigma_n)$  of $(Z(t))$ (see Alsmeyer~\cite{Alsmeyer93:0} and the references therein for related models). Recall that $(W(t))$ is given by Formula~\eqref{eq:W}. The following results are key to analyzing the two-chunk network of Section~\ref{sec:net}.

\begin{proposition}\label{prop:Z(H0)}
Assume that $\mu_Z - \nu > \mu_W$, and let $H_0$  be the extinction time of
$(W(t))$, i.e., 
\[ 
H_0 = \inf \{t \geq 0: W(t) = 0 \}, 
\]             
then the random variable $H_0$ is almost surely finite  and:
\begin{enumerate}\renewcommand{\labelenumi}{(\roman{enumi})}
\item \label{it1} $Z(H_0) - Z(0) \leq e^{\mu_W H_0} M_{Y}^*$ where 
\[
M_{Y}^* = \sup_{t \geq 0} \left( e^{-\mu_W t}  Y(t) \right).
\]
\item There exists a finite constant $C$ such that for any $z \geq 0$ and $w \geq 1$,
	\begin{equation} \label{eq:H_0}
		\E_{(w,z)}(H_0) \leq C \left( \log(w) + 1\right).
	\end{equation}
\end{enumerate}
\end{proposition}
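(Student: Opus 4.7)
The plan is to derive (i) directly from the decomposition~\eqref{eq:W}, and to prove (ii) by bounding $H_0$ pathwise by the first time $B_\sigma$ overtakes $Y$, then invoking Proposition~\ref{lemma:births}.

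For (i), since $W$ decreases only at killing times and only by one unit each time, $H_0 = \sigma_\kappa$; at that instant \eqref{eq:W} reads $Y(H_0) = \sum_{i=1}^\kappa X_i(H_0)$, and since each $X_i$ is a Yule process started at $1$ at time $\sigma_i \leq H_0$ one has $X_i(H_0) \geq 1$, so $Y(H_0) \geq \kappa$. Writing the total change in $Z$ as births minus deaths gives $Z(H_0) - Z(0) \leq B_\sigma(H_0) = \kappa$. Combined with $Y(H_0) \leq e^{\mu_W H_0} M_Y^*$, this yields (i).

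For (ii), the same reasoning applied while $W(t) \geq 1$ forces $Y(t) > B_\sigma(t)$ (indeed, the decomposition then reduces to $W(t) = Y(t) - \sum_{i=1}^{B_\sigma(t)} X_i(t)$ with each $X_i(t) \geq 1$), so
\[ H_0 \leq \tau := \inf\{t \geq 0 : B_\sigma(t) \geq Y(t)\}. \]
To bound $\tau$ I fix $\eta \in (0, \eta^*(\nu/\mu_Z))$ and set $\xi := \sup_{t \geq \sigma_1} e^{\eta(\mu_Z - \nu)t} B_\sigma(t)^{-\eta}$, so that $B_\sigma(t) \geq \xi^{-1/\eta} e^{(\mu_Z - \nu)t}$ for $t \geq \sigma_1$ while $\sup_z \E_z(\xi) < +\infty$ by Proposition~\ref{lemma:births}. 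Doob's $L^2$-inequality applied to the martingale $M_Y$ (with $M_Y(0) = w$ and $\sup_t \E M_Y(t)^2 = w(w+1)$) yields $\E(M_Y^*) \leq 2(w+1)$, and $Y(t) \leq e^{\mu_W t} M_Y^*$. Since $\mu_Z - \nu > \mu_W$, $B_\sigma(t) \geq Y(t)$ as soon as $e^{(\mu_Z - \nu - \mu_W)t} \geq \xi^{1/\eta} M_Y^*$, giving
\[ H_0 \leq \sigma_1 + \frac{1}{\mu_Z - \nu - \mu_W}\bigl(\log^+(M_Y^*) + \eta^{-1}\log^+(\xi)\bigr). \]
Taking expectations, $\E \sigma_1$ is bounded by a short direct computation on $Z$, $\E \log^+(\xi)$ is bounded uniformly in $z$ via Jensen's inequality, and $\E \log^+(M_Y^*) \leq \log(1 + \E M_Y^*) = O(\log w + 1)$, which delivers~\eqref{eq:H_0}.

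The main obstacle in this plan is the appeal to Proposition~\ref{lemma:births}: without the uniform-in-$z$ polynomial-decay estimate on $B_\sigma$ it provides, one could only show that $B_\sigma$ grows exponentially, without controlling fluctuations finely enough to obtain the sharp $\log w$ scaling. The intuition behind the logarithmic dependence is that extinction occurs near the deterministic crossover $t \asymp \log(w)/(\mu_Z - \nu - \mu_W)$, where the typical size $w e^{\mu_W t}$ of $Y$ meets the typical size $e^{(\mu_Z - \nu)t}$ of $B_\sigma$.
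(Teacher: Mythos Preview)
Your argument is correct and follows essentially the same route as the paper: both proofs rest on the pathwise inequality $B_\sigma(H_0)\le Y(H_0)\le e^{\mu_W H_0}M_Y^*$, combine it with the lower bound on $B_\sigma$ supplied by Proposition~\ref{lemma:births}, and extract the $\log w$ from $\E M_Y^*=O(w)$. The only noteworthy difference is that the paper, rather than taking logarithms and then expectations, raises the inequality to the power~$\eta$, takes expectations, and applies Jensen to the sum $\sum_{i=1}^w M_{Y,i}^*$; this yields the stronger exponential-moment estimate~\eqref{eq:exp(H_0)}, which is labelled and cited again later in the paper, whereas your route gives~\eqref{eq:H_0} directly but not this byproduct.
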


In~\eqref{eq:H_0} the subscript $(w,z)$ refers to the initial state of the Markov process
$(W(t), Z(t))$. More generally in the rest of the paper we use the convention that if
$(U(t))$ is a Markov process then the index $u$ of $\P_u$ and $\E_u$ will refer to the
initial condition of this Markov process. 

\begin{proof}
Define $\alpha=\mu_Z - \nu$.
Concerning the almost sure finiteness of $H_0$, note that Equation~\eqref{eq:W} entails
that $W(t) \leq Y(t) - B_\sigma(t)$ for all $t \geq 0$ on the event $\{H_0 =
+\infty\}$. As $t$ goes to infinity, both $\exp(-\mu_W t) Y(t)$ and $\exp(- \alpha t)
B_\sigma(t)$ converge almost surely to positive and finite random variables (see
Nerman~\cite{Nerman81:0}), which implies, when $\alpha=\mu_Z-\nu > \mu_W$, that $W(t)$ converges
to~$-\infty$ on $\{H_0 = +\infty\}$, and so this event is necessarily of probability zero. 
	
The first point~(i) of the proposition comes from Identity~\eqref{eq:W} evaluated at $t = H_0$: 
\begin{equation}\label{NT}
	Z(H_0) -Z(0) \leq  B_\sigma(H_0) \leq  Y(H_0) \leq  e^{\mu_W H_0} M_{Y}^*.
\end{equation}
 By using the relation $\exp(x) \geq x$, Equation~\eqref{eq:H_0} follows from the following bound:       
for any $\eta < \eta^*(\nu / \mu_Z)$ (recall that $\eta^*$ is given   by Equation~\eqref{eq:eta}), 
	\begin{equation} \label{eq:exp(H_0)}
		\sup_{w \geq 1, z \geq 0} \left[ w^{-\eta} \E_{(w,z)}\left( e^{\eta(\alpha - \mu_W)H_0} \right) \right] < +\infty.
	\end{equation}
So all  is left to  prove is this  bound. Under $\P_{(w,z)}$, $(Y(t))$  can be represented
as the sum  of $w$ i.i.d.\  Yule  processes, and  so  $M_{Y}^*  \leq M^*_{Y,1}  +  \cdots
+ M^*_{Y,w}$  with $(M^*_{Y,i})$  i.i.d.\ distributed  like  $M_{Y}^*$ under  $\P_{(1,z)}$;
Inequality~\eqref{NT}  then entails that
	\[
	e^{(\alpha - \mu_W) H_0} \leq \left(\sum_{i=1}^w M^*_{Y,i}\right) \times \sup_{t \geq \sigma_1} \left( e^{\alpha t}  / B_\sigma(t) \right).
	\]
By independence of $(M^*_{Y,i})$ and~$(B_\sigma(t))$, Jensen's inequality gives for any $\eta < 1$
	\[
	\E_{(w,z)} \left( e^{\eta(\alpha - \mu_W) H_0} \right) \leq w^\eta \left( \E\left( M^*_{Y,1} \right)\right)^\eta \E_z \left( \sup_{t \geq \sigma_1} \left( e^{\eta \alpha t} B_\sigma(t)^{-\eta} \right) \right),
	\]
hence the bound~\eqref{eq:exp(H_0)} follows from Proposition~\ref{lemma:births}.
\end{proof}

\noindent
One concludes this section with a Markov chain which will be used in Section~\ref{sec:net}.
Define recursively the sequence $(V_n)$ by, $V_0=v$ and
\begin{equation}\label{eqV}
V_{n+1} {=} \sum_{k=1}^{A_n(V_n)} I_{n, k}, \;\; n\geq 0,
\end{equation}
where for each $n$  $(I_{n,k},k \geq 1)$ are  identically  distributed  integer valued random variables  independent  of $V_n$  and
$A_n(V_n)$,  and such  that $\E(I_{n, 1})=p$  for some  $p\in(0,1)$. For  $v>0$, $A_n(v)$  is an
independent random variable with the  same distribution as $Z(H_{0})$ under $\P_{(1,v)}$, i.e., with  the initial condition $(W(0),
Z(0))=(1,v)$.

The above equation~\eqref{eqV} can be interpreted as a branching  
process with immigration, see Seneta~\cite{Seneta70:0}, or also as an auto-regressive model. 
\begin{proposition}\label{prop:V}
Under the condition $\mu_Z - \nu > \mu_W$, if $(V_n)$ is the Markov chain defined by Equation~\eqref{eqV}
and, for $K \geq 0$,  
\[
	N_K = \inf\{n \geq 0: V_n \leq K\},
	\]
then there exist $\gamma > 0$ and $K\in\N$  such that 
	\begin{equation} \label{eq:N}
		\E(N_K | V_0 = v) \leq \frac{1}{\gamma}\log(1+v), \quad\forall v\geq 0.
	\end{equation}
The Markov chain $(V_n)$ is in particular positive recurrent.
\end{proposition}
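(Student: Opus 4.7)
The plan is to set up a geometric Foster-Lyapunov drift for $(V_n)$ with test function $f(v) = 1 + v^\theta$ for a suitably small $\theta \in (0,1)$, then extract the logarithmic bound on $\E_v[N_K]$ by a supermartingale and Jensen argument.

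Writing $\alpha = \mu_Z - \nu$, I would fix $\theta \in (0,1)$ small enough that $2\theta\mu_W < \eta^{*}(\nu/\mu_Z)(\alpha - \mu_W)$, which is possible since $\alpha > \mu_W$ and $\eta^{*}(\nu/\mu_Z) > 0$. Then \eqref{eq:exp(H_0)} applied with $w = 1$ and $\eta = 2\theta\mu_W/(\alpha-\mu_W)$ gives $\sup_{v \geq 0} \E_{(1,v)}[e^{2\theta\mu_W H_0}] < \infty$, and Cauchy-Schwarz together with $M_Y^{*} \in L^2$ (Doob's inequality) yields $\sup_{v\geq 0} \E_{(1,v)}[(e^{\mu_W H_0} M_Y^{*})^\theta] < +\infty$. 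Combining with Proposition~\ref{prop:Z(H0)}(i), i.e., $Z(H_0) \leq v + e^{\mu_W H_0} M_Y^{*}$ under $\P_{(1,v)}$, and the sub-additivity $(x+y)^\theta \leq x^\theta + y^\theta$ valid for $\theta \in (0,1)$, one obtains
\[ \E_{(1,v)}\bigl[A_n(v)^\theta\bigr] \leq v^\theta + C_\theta, \]
for some finite $C_\theta$ independent of $v$.

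Given $V_n = v$ and $A_n(V_n) = a$, $V_{n+1}$ is a sum of $a$ identically distributed integer random variables of common mean $p$, so Jensen's inequality for the concave map $x \mapsto x^\theta$ gives $\E[V_{n+1}^\theta \mid V_n, A_n(V_n)] \leq (p\, A_n(V_n))^\theta$. Taking expectations,
\[ \E[f(V_{n+1}) \mid V_n = v] \leq p^\theta f(v) + c, \qquad c = 1 - p^\theta + p^\theta C_\theta. \]
Since $p^\theta < 1$, choosing $\beta = (1+p^\theta)/2 \in (p^\theta, 1)$ and taking $K \in \N$ large enough so that $(\beta - p^\theta) K^\theta \geq c -(1-\beta)$ forces the pure geometric drift $\E[f(V_{n+1}) \mid V_n = v] \leq \beta f(v)$ for every $v > K$.

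The conclusion then follows from the classical supermartingale argument. The process $M_n = \beta^{-(n \wedge N_K)} f(V_{n \wedge N_K})$ is a non-negative supermartingale with $\E_v[M_0] = f(v)$, so Fatou's lemma combined with $f \geq 1$ yields $\E_v[\beta^{-N_K}] \leq f(v) = 1 + v^\theta$; in particular $N_K$ is almost surely finite. Jensen applied to $\log$ then gives
\[ \log(1/\beta)\,\E_v[N_K] \leq \log \E_v[\beta^{-N_K}] \leq \log(1+v^\theta) \leq (1+\theta)\log(1+v) \]
for $v \geq 1$, using $\log(1+v^\theta) \leq \log 2 + \theta \log v$ and $\log 2 \leq \log(1+v)$; for $v \leq K$ the bound is trivial since $N_K = 0$. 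This is~\eqref{eq:N} with $\gamma = \log(1/\beta)/(1+\theta)$, and positive recurrence is immediate from the finiteness of $\E_v[N_K]$ for every $v$. The main obstacle is the very first step: securing a fractional exponential moment of $H_0$ uniform in $v$ under $\P_{(1,v)}$, which forces $\theta$ to lie in the admissible range dictated by~\eqref{eq:exp(H_0)}, ultimately constrained by the delicate estimate of Proposition~\ref{lemma:births}.
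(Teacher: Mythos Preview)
Your proof is correct and takes a genuinely different route from the paper's. The paper works directly with the Lyapunov function $v\mapsto\log(1+v)$ and establishes an \emph{additive} drift: using Jensen for the concave logarithm together with the bound $Z(H_0)\le v+e^{\mu_W H_0}M_Y^*$, it shows
\[
\limsup_{v\to\infty}\Bigl(\E_v[\log(1+V_1)]-\log(1+v)\Bigr)\le\log p<0,
\]
the limit being justified by dominated convergence after invoking the stochastic monotonicity of $H_0$ in the initial state $v$ (so that the $\P_{(1,0)}$-integrability of $H_0$ from Proposition~\ref{prop:Z(H0)}(ii) suffices as a dominating bound). Foster's criterion (Theorem~8.6 of Robert~\cite{Robert03}) then yields both ergodicity and~\eqref{eq:N} in one stroke, with $\gamma=-(\log p)/2$.

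Your approach instead proves a \emph{geometric} drift for $f(v)=1+v^\theta$, trading the monotonicity/domination argument for the stronger input~\eqref{eq:exp(H_0)} (a fractional exponential moment of $H_0$ uniform in $v$). This forces the restriction on $\theta$ dictated by $\eta^*(\nu/\mu_Z)$, but in exchange you avoid the coupling needed to compare $H_0$ across different initial states, and you in fact obtain more than is asked: the exponential tail bound $\E_v[\beta^{-N_K}]\le 1+v^\theta$, from which~\eqref{eq:N} is recovered via Jensen. Both arguments ultimately rest on Proposition~\ref{prop:Z(H0)}; the paper uses only its first-moment conclusion, while you exploit the finer estimate~\eqref{eq:exp(H_0)} established inside its proof.
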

\begin{proof}
For $V_0=v\in\N$, Jensen's Inequality and Definition~\eqref{eqV} give the relation
\begin{equation}\label{Goisot}
\E_v\log \left(\frac{1+V_1}{1+v}\right)\leq \E_{(1,v)}\log \left[\frac{1+pZ(H_{0})}{1+v}\right].
\end{equation}
From Proposition~\ref{prop:Z(H0)} and by using the same notations, one gets that, under
$\P_{(1,v)}$, 
\[
Z(H_{0})\leq v+ e^{\mu_W H_0}M_{Y}^*,
\]
where $(Y(t))$  is a Yule process starting with one individual. 
By looking at the  birth instants of $(Z(t))$, it is easily checked that
 the random variable $H_{0}$ under $\P_{(1,v)}$  is stochastically
bounded by $H_{0}$ under $\P_{(1,0)}$. The integrability of $H_0$ under $\P_{(1,0)}$ (proved in
Proposition~\ref{prop:Z(H0)}) and of $M_{Y}^*$ give that the expression
\[
\log\left(\frac{1+p(v+ e^{\mu_W H_0}M_{Y}^*)}{1+v}\right)
\]
bounding the right hand side of Relation~\eqref{Goisot} is also an integrable random variable under $\P_{(1,0)}$.
Lebesgue's Theorem gives  therefore that 
\[
\limsup_{v\to+\infty} \left[ \E_v\log \left(\frac{1+V_1}{1+v}\right) \right] \leq \log p <0.
\]
Consequently, one concludes that $v \mapsto \log(1+v)$ is a Lyapunov function for the Markov
chain $(V_n)$, i.e., if $\gamma=-(\log p)/2$, there exists $K$ such that for $v\geq K$,
\[
\E_v\log \left(1+V_1\right) - \log \left(1+v\right)\leq -\gamma. 
\]
Foster's criterion, see Theorem~8.6 of Robert~\cite{Robert03}, implies
that $(V_n)$ is indeed ergodic and that Relation~\eqref{eq:N} holds. 
\end{proof}
\section{The  Single-Chunk Network} \label{sec:two-queue}
This  section  is devoted  to  the  study of  a  class  of  two-dimen\-sional Markov  jump
processes $(X_0(t),X_1(t))$,  the corresponding $Q$-matrix $\Omega_r$ is given, for $x=(x_0,x_1)\in\N^2$, by
\begin{equation}\label{def}
\begin{cases}
\Omega_r[(x_0,x_1),(x_0+1,x_1)]&=\ \lambda,\\
\Omega_r[(x_0,x_1),(x_0-1,x_1+1)]&=\ \mu r(x_0,x_1) (x_1\vee 1)\ind{x_0 > 0},\\
\Omega_r[(x_0,x_1),(x_0,x_1-1)]&=\ \nu x_1,
\end{cases}
\end{equation}
where $x \mapsto r(x)$, referred to as the {\em rate function},
is some fixed function on $\N^2$ with values in $[0,1]$.

\medskip
From a modeling perspective, this Markov process describes the following system.  Requests
for a single file  arrive with rate $\lambda$, the first component  $X_0(t)$ is the number
of requests  which did not  get the file,  whereas the second  component is the  number of
requests having the file and acting  as servers until they leave the file-sharing network.
The constant $\mu$ can  be viewed as the file transmission rate, and  $\nu$ as the rate at
which servers having the file leave.  The term $r(x_0,x_1)$ describes the interaction of
downloaders and uploaders in  the system. The term $x_1\vee 1$ can  be interpreted so that
there is one  permanent server in the  network, which is contacted if  there are no
other uploader nodes in  the system. A related system where there  is always one permanent
server for the file can be modeled by replacing the term $x_1\vee 1$ by $x_1 + 1$. See the
remark at the end of this section.

Several related examples of this class of models have been recently investigated. The case
\[
r(x_0,x_1) = \frac{x_0}{x_0 + x_1}
\]
is considered in N\'u\~nez-Queija and Prabhu~\cite{Queija08:0} and Massouli\'e and
Vojnovi\'c~\cite{Massoulie05:0}; in this case the downloading time of the file is
neglected. 
Susitaival \etal~\cite{Susitaival06:0} analyzes the rate  function $r(x)$ 
\[
r(x_0,x_1)= 1 \wedge \left(\alpha \frac{x_0}{x_1}\right)
\] 
with $\alpha > 0$ and $a\wedge b$ denotes $\min(a,b)$ for $a$, $b\in\R$.
This model allows to take into account that a request cannot be served by more
than one server.  See also Qiu and Srikant~\cite{Qiu04:0}. 

With a slight abuse of notation, for $0<\delta\leq 1$, the matrix $\Omega_\delta$ will refer to
the case when the function $r$ is identically  equal  to $\delta$.   Note  that  the
boundary  condition  $x_1  \vee 1$  for departures  from the first  queue prevents  the
second  coordinate from  ending up  in the absorbing state~$0$. Other possibilities are
discussed at the end of this section. In the following $(X^r(t))=(X^r_0(t),X^r_1(t))$
[resp.\ $(X^\delta(t))$] will denote a Markov process with $Q$-matrix $\Omega_r$
[resp.\ $\Omega_\delta$].   

\subsection*{Free Process}
For $\delta>0$, $Q_\delta$ denotes the following $Q$-matrix 
\begin{equation}\label{defQd}
\begin{cases}
Q_\delta[(y,z),(y+1,z)]&=\lambda,\\
Q_\delta[(y,z),(y-1,z+1)]&=\mu \delta  (z\vee 1),\\
Q_\delta[(y,z),(y,z-1)]&=\nu z.
\end{cases}
\end{equation}
The  process  $(Y^\delta(t))=(Y_0^\delta(t),  Y_1^\delta(t))$,  referred to  as  the  free
process, will  denote a Markov  process with $Q$-matrix  $Q_\delta$.  Note that  the first
coordinate $(Y_0^\delta(t))$  may become negative.  The second  coordinate $(Y_1^\delta(t))$ of
the free  process is the birth-and-death process with the same distribution as the process
$(Z(t))$ introduced in Section~\ref{sec:Interacting}.  
  It is easily checked  that if
$\rho_\delta$  defined  as $\delta  \mu  /  \nu$ is  such  that  $\rho_\delta  < 1$,  then
$(Y_1^\delta(t))$   is  an   ergodic  Markov   process  converging   in   distribution to
$Y_1^\delta(\infty)$ and that
\begin{equation} \label{eq:lambda}
\lambda^*(\delta)\stackrel{\text{def.}}{=} \nu \E(Y_1^\delta(\infty)) =
\delta \mu \E(Y_1^\delta(\infty) \vee 1)=\frac{ \delta  \mu}{(1-\rho_\delta)(1 - \log(1-\rho_\delta))}.
\end{equation}
When $\rho_\delta > 1$, then the process $(Y_1^\delta(t))$ converges almost surely and in expectation to
infinity. In the sequel $\lambda^*(1)$ is simply denoted $\lambda^*$. 

\bigskip
\noindent
In  the following it  will be  assumed, see Condition~(C)  below, that  the rate  function~$r$
converges to $1$ as the first coordinate goes to infinity; as will be seen, the special case $r \equiv 1$ then plays a special role, and so before analyzing the stability properties of  $(X^r(t))$, one begins with  an informal discussion when  the rate function~$r$ is identically equal to~$1$.  Since the departure rate from the system is proportional
to the number  of requests/servers in the second  queue, a large number of  servers in the
second  queue gives  a  high departure  rate, irrespectively  of  the state  of the  first
queue. The input rate of new requests  being constant, the real bottleneck with respect to
stability  is therefore  when  the  first queue  is  large.  The  interaction  of the  two
processes $(X_0^{1}(t))$ and $(X_1^{1}(t))$ is expressed through the indicator function of
the set  $\{X_0^1(t) >  0\}$.  The  second queue $(X_1^{1}(t))$  locally behaves  like the
birth-and-death process $(Y_1^1(t))$ as long as  $(X^{1}_0(t))$ is away from $0$.  The two
cases $\rho_1>1$ and $\rho_1<1$ are considered.

If $\rho_1>1$,  i.e., $\mu >  \nu$, the process  $(X_1^{1}(t))$ is a transient  process as
long as the first coordinate is  non-zero. Consequently, departures from the second queue 
occur  faster and  faster. Since,  on the  other hand,  arrivals occur  at a  steady rate,
departures eventually outpace arrivals. The  fact that the  second queue
grows when $(X_0(t))$ is away from $0$ stabilizes the system independently of the value of
$\lambda$, and so the system should be stable for any $\lambda > 0$.

If $\rho_1 < 1$, and as long as $(X_0(t))$  is away from $0$, the coordinate $(X_1^{1}(t))$
locally behaves like the ergodic Markov process $(Y_1^{1}(t))$. Hence if $(X_0^{1}(t))$ is
non-zero  for   long  enough,   the  requests   in  the  first   queue  see   in  average
$\E(Y_1^{1}(\infty) \vee 1)$ servers which work at rate $\mu$. Therefore, the stability
condition for the  first  queue  should be 
\[
\lambda  <  \mu   \E(Y_1^{1}(\infty)  \vee  1)  =\lambda^*
\]
where $\lambda^*=\lambda^*(1)$ is defined by Equation~\eqref{eq:lambda}.  Otherwise if
$\lambda  > \lambda^*$, the system should be unstable. 


\subsection*{Transience and Recurrence Criteria for $(X^r(t))$} 

\begin{proposition}[Coupling]\label{lem:coupling}
If $X^r(0) = Y^1(0) \in \N^2$, there exists a coupling of the processes $(X^r(t))$ and
$(Y^1(t))$  such that the relation
\begin{equation} \label{eq:coupling-1}
X_0^r(t) \geq Y_0^1(t) \text{ and } X_1^r(t) \leq Y_1^1(t),
\end{equation}
holds for all $t\geq 0$ and for any sample path. 

For any $0 \leq \delta \leq 1$, if 
\[ 
\tau_\delta = \inf \{t \geq 0: r(X^r(t)) \leq \delta \} \text{ and } \sigma = \inf \{ t \geq
0: X_0^r(t) = 0 \},
\]
and if $X^r(0) = Y^\delta(0) \in \N^2$ then there exists a coupling of the processes
$(X^r(t))$ and $(Y^\delta(t))$  such that, for any sample path, the relation 
\begin{equation} \label{eq:coupling-2}
X_0^r(t) \leq Y_0^\delta(t) \text{ and } X_1^r(t) \geq Y_1^\delta(t)
\end{equation}
holds for all $t\leq \tau_\delta \wedge \sigma$. 
\end{proposition}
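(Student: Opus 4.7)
The plan is to build each coupling on a single probability space by driving the two processes with a common collection of Poisson clocks --- one per event type (arrival, service, departure) --- and routing each event either to $X^r$ only, to the reference process only, or to both, via state-dependent thinning. Once the routing is chosen so that the marginal rates agree with $\Omega_r$ on one side and with $Q_1$ (resp.\ $Q_\delta$) on the other, a case analysis on the (at most four) possible transitions from a given state shows that the asserted ordering is preserved by each jump; the invariant then propagates to all times (resp.\ up to $\tau_\delta \wedge \sigma$) by the usual induction on the successive jump instants of the joint process.

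For the first assertion the ingredients are: (a) a common Poisson flow of rate $\lambda$ that increments $X_0^r$ and $Y_0^1$ simultaneously; (b) a shared service trigger at rate $\mu r(X^r(t))(X_1^r(t) \vee 1)\ind{X_0^r(t) > 0}$, firing a service in both $X^r$ and $Y^1$, together with a $Y^1$-only service trigger at the residual rate $\mu(Y_1^1(t) \vee 1) - \mu r(X^r(t))(X_1^r(t) \vee 1)\ind{X_0^r(t) > 0}$; (c) a shared departure trigger at rate $\nu X_1^r(t)$ and a $Y^1$-only departure trigger at rate $\nu(Y_1^1(t) - X_1^r(t))$. Both residual rates are non-negative under the invariant $X_0^r \geq Y_0^1,\, X_1^r \leq Y_1^1$, thanks to $r \leq 1$ and $X_1^r \leq Y_1^1$, and each of the four transition types preserves the invariant. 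In particular, the indicator $\ind{X_0^r > 0}$ on the shared service is precisely what prevents a simultaneous decrement from violating $X_0^r \geq Y_0^1$, even though $Y_0^1$ is allowed to be negative.

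The second assertion uses a mirror-image construction: the shared service now fires at rate $\mu \delta(Y_1^\delta(t) \vee 1)$ in both processes, with an $X^r$-only extra service trigger at rate $\mu r(X^r(t))(X_1^r(t) \vee 1) - \mu \delta(Y_1^\delta(t) \vee 1)$, while shared departures fire at rate $\nu Y_1^\delta(t)$ and $X^r$-only extra departures at rate $\nu(X_1^r(t) - Y_1^\delta(t))$. The same jump-by-jump check shows that every transition preserves $X_0^r \leq Y_0^\delta,\, X_1^r \geq Y_1^\delta$. The main --- and essentially only --- technical point is the non-negativity of the extra service rate, which requires the three conditions $r(X^r(t)) > \delta$, $X_0^r(t) > 0$ and $X_1^r(t) \geq Y_1^\delta(t)$ to hold simultaneously. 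The first two are supplied on $[0, \tau_\delta \wedge \sigma)$ by the very definitions of $\tau_\delta$ and $\sigma$, while the third is the invariant being propagated; the customary bootstrap over the successive jump times reconciles these two facts, and this is exactly what restricts the second comparison to $t \leq \tau_\delta \wedge \sigma$ in the statement.
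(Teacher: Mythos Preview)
Your proof is correct and follows essentially the same approach as the paper: both arguments build the coupling jump by jump and verify that each possible transition preserves the asserted ordering, the key point being the rate comparison $\mu\, r(x)(x_1\vee 1)\ind{x_0>0}\le \mu(z\vee 1)$ (and its mirror inequality in the second part), which is exactly your non-negativity of the residual thinning rates. The paper's version is simply terser---it checks the rate inequalities and the invariant at the next jump without spelling out the clock decomposition---while you make the Poisson-thinning construction explicit; your parenthetical remark about the indicator $\ind{X_0^r>0}$ is slightly misworded (a \emph{simultaneous} decrement always preserves $X_0^r\ge Y_0^1$; the indicator's role is just that it is part of $\Omega_r$ and keeps $X_0^r$ in $\N$), but this does not affect the argument.
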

\begin{proof}
Let $X^r(0)=(x_0,x_1)$ and $Y^1(0)=(y,z)$ be such that $x_0\geq y$ and $x_1\leq
z$, one has to prove that the processes $(X^r(t))$ and $(Y^1(t))$ can be constructed such that
Relation~\eqref{eq:coupling-1} holds at the time of the next jump of one of them.  See
Leskel\"a~\cite{Leskela} for the existence of couplings using analytical techniques.

The arrival rates in the first queue are the same for both processes. If $x_1<z$, a departure
from the second queue for $(Y^1(t))$ or $(X^r(t))$  preserves the order relation~\eqref{eq:coupling-1} and if
$x_1=z$, this departure  occurs at the same rate for both processes and thus the
corresponding instant can be chosen at the same (exponential) time. For the departures
from the first to the second queue, the departure rate for
$(X^r(t))$ is $\mu r(x_0,x_1) (x_1\vee 1)\ind{x_0 > 0}\leq \mu (z\vee 1)$ which is the departure rate
for $(Y^1(t))$, hence the corresponding departure instants can be taken in the
reverse order so that Relation~\eqref{eq:coupling-1} also holds at the next jump instant.  The
first part of the proposition is proved. 

The   rest  of   the   proof  is   done   in  a   similar   way:  The initial states  $X^r(0)=(x_0,x_1)$   and
$Y^\delta(0)=(y,z)$ are such that $x_0\leq y$ and $x_1\geq z$. With the killing of
the processes at time $\tau_\delta \wedge \sigma$ one can assume additionally that $x_0\not=0$
and that the relation $r(x_0,x_1)\geq \delta$ holds; Under these assumptions one can
check by inspecting the next transition that~\eqref{eq:coupling-2} holds. The proposition
is proved. 
\end{proof}
\begin{proposition}\label{prop:transience}
Under the condition $\mu < \nu$,  the relation
\[
\liminf_{t \to +\infty} \frac{X_0^r(t) }{t} \geq \lambda - \lambda^*
\]
holds  almost surely. 
In particular, if $\mu < \nu$ and $\lambda > \lambda^*$, then the process $(X^r(t))$ is transient.
\end{proposition}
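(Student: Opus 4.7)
The plan is to reduce to the free process via coupling, then apply a law-of-large-numbers argument to its first coordinate. By the first half of Proposition~\ref{lem:coupling}, starting both processes from the same initial state $X^r(0) = Y^1(0) \in \N^2$, we may couple $(X^r(t))$ with the free process $(Y^1(t))$ so that $X_0^r(t) \geq Y_0^1(t)$ for all $t \geq 0$ almost surely. It therefore suffices to show that
\[
\liminf_{t \to +\infty} \frac{Y_0^1(t)}{t} \geq \lambda - \lambda^*
\qquad \text{a.s.}
\]

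Next, decompose $Y_0^1(t) = Y_0^1(0) + N_\lambda(t) - B(t)$, where $N_\lambda$ is a Poisson process of rate $\lambda$ counting arrivals into the first queue, and $B(t)$ is the cumulative number of transitions of the form $(y,z) \to (y-1, z+1)$ up to time $t$, i.e., the number of births in $(Y_1^1(t))$ up to time $t$. The compensator of $B$ is $\int_0^t \mu (Y_1^1(s) \vee 1)\,ds$, so $M_B(t) = B(t) - \int_0^t \mu (Y_1^1(s) \vee 1)\,ds$ is a local martingale with predictable quadratic variation equal to its compensator.

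Under the assumption $\mu < \nu$, i.e., $\rho_1 < 1$, the birth-and-death process $(Y_1^1(t))$ is ergodic with stationary distribution such that $\mu \E(Y_1^1(\infty) \vee 1) = \lambda^*$, by Equation~\eqref{eq:lambda}. The ergodic theorem for positive recurrent continuous-time Markov chains, applied to the bounded-below functional $z \mapsto \mu(z \vee 1)$ (integrable under the stationary distribution since $\E(Y_1^1(\infty))$ is finite), gives
\[
\frac{1}{t} \int_0^t \mu (Y_1^1(s) \vee 1) \, ds \xrightarrow[t\to\infty]{\text{a.s.}} \lambda^*,
\]
regardless of the initial state. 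Combined with the strong law $N_\lambda(t)/t \to \lambda$ a.s.\ and the law of large numbers for the martingale $M_B$ (its bracket grows linearly, so $M_B(t)/t \to 0$ a.s.), this yields $Y_0^1(t)/t \to \lambda - \lambda^*$ almost surely, which gives the liminf bound through the coupling inequality.

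For the transience claim, since $\lambda > \lambda^*$ the liminf bound forces $X_0^r(t) \to +\infty$ almost surely, so $(X^r(t))$ visits any finite subset of $\N^2$ only finitely often and hence is transient. The main technical point to verify carefully is the martingale control, since the integrand $\mu(Y_1^1(s)\vee 1)$ is unbounded; however, ergodicity yields $\int_0^t (Y_1^1(s)\vee 1)\,ds = O(t)$ a.s., so $\langle M_B\rangle(t)/t$ is bounded and $M_B(t)/t \to 0$ follows from standard strong laws for purely discontinuous square-integrable martingales.
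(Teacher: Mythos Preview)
Your proof is correct and follows essentially the same route as the paper: couple with the free process via Proposition~\ref{lem:coupling}, decompose $Y_0^1(t)$ as Poisson arrivals minus the counting process of births in $(Y_1^1(t))$, and use ergodicity of $(Y_1^1(t))$ under $\mu<\nu$ to identify the asymptotic rate of births as $\lambda^*$. The only difference is cosmetic: the paper invokes ``the ergodic theorem in this setting'' directly to conclude $A(t)/t\to\lambda^*$ almost surely, while you make this explicit by splitting $B(t)$ into its compensator $\int_0^t\mu(Y_1^1(s)\vee1)\,ds$ plus a martingale and handling the two pieces separately via the ergodic theorem and the martingale strong law.
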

\begin{proof}
By Proposition~\ref{lem:coupling}, one can assume that there exists a version of
$(Y^1(t))$ such that $X^r_0(0)=Y_0^1(0)$ and the relation $X^r_0(t)\geq Y_0^1(t)$ holds
for any $t\geq 0$. 
From  Definition~\eqref{defQd} of the $Q$-matrix of $(Y^1(t))$, one has, for $t\geq 0$,
\[
Y_0^1(t)=Y_0^1(0)+ {\cal N}_\lambda(t) -A(t),
\]
where $({\cal N}_\lambda(t))$  is a Poisson process with parameter $\lambda$ and $(A(t))$
is  the number of arrivals (jumps of size $1$) for the second coordinate $(Y_1^1(t))$: in
particular
\[
\E(A(t))= \mu \E\left(\int_0^t Y_1^1(s)\vee 1\,ds\right).
\]
Since $(Y_1^1(t))$ is an ergodic Markov process under the condition $\mu<\nu$, the
ergodic theorem in this setting gives that
\[
\lim_{t\to+\infty} \frac{1}{t}A(t)=\lim_{t\to+\infty} \frac{1}{t}\E(A(t))=\mu \E\left(Y_1^1(\infty)\vee 1\right)=\lambda^*,
\]
by Equation~\eqref{eq:lambda}, hence $(Y_0^1(t)/t)$ converges almost surely to
$\lambda-\lambda^*$. The proposition is proved. 
\end{proof}
\noindent
The next result establishes the ergodicity result of this section. 
\begin{proposition} \label{prop:ergodicity}
If the rate function $r$ is such that, for any $x_1\in\N$,
\begin{equation}
\lim_{x_0 \to +\infty} r(x_0,x_1) = 1, \tag{C}
\end{equation}
and if $\mu \geq \nu$, or if $\mu < \nu$ and $\lambda < \lambda^*$ with 
\begin{equation} \label{eq:lambda2}
\lambda^*=\frac{  \mu}{(1-\rho)(1 - \log(1-\rho))},
\end{equation}
and $\rho=\mu/\nu$,  then $(X^r(t))$ is an
ergodic Markov process.
\end{proposition}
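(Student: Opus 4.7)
The strategy is to verify Foster's positive recurrence criterion (Theorem~8.6 of~\cite{Robert03}, used in the proof of Proposition~\ref{prop:V}) by exhibiting a nonnegative function $V:\N^2\to\R_+$ and a finite set $F\subset\N^2$ with $\Omega_r V\leq -1$ outside $F$. The first step is to use condition~(C) together with the second part of Proposition~\ref{lem:coupling} to relate $(X^r(t))$ to the free process $(Y^\delta(t))$ for a suitable $\delta\leq 1$: for any $\delta<1$, the set $\{r\geq\delta\}$ is, for each fixed $x_1$, cofinite in $x_0$, and the coupling $X_0^r\leq Y_0^\delta$, $X_1^r\geq Y_1^\delta$ holds up to the first exit from it.

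In the sub-case $\mu<\nu$ and $\lambda<\lambda^*$, continuity at $\delta=1$ of $\delta\mapsto\lambda^*(\delta)$ in~\eqref{eq:lambda} yields some $\delta<1$ with $\rho_\delta<1$ and $\lambda<\lambda^*(\delta)$. The second coordinate $(Y_1^\delta(t))$ of the free process is then an ergodic birth-and-death chain with $\mu\delta\,\E(Y_1^\delta(\infty)\vee 1)=\lambda^*(\delta)$, and I would try a Lyapunov function of the form $V(x_0,x_1)=x_0+g(x_1)$, where $g$ solves the Poisson equation $G_1^\delta g(x_1)=\mu\delta(x_1\vee 1)-\lambda^*(\delta)$ for the generator $G_1^\delta$ of $(Y_1^\delta(t))$; with this choice the drift $\Omega_\delta V$ on $\{x_0>0\}$ collapses to $\lambda-\lambda^*(\delta)<0$. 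In the sub-case $\mu\geq\nu$, one instead picks $\delta$ with $\rho_\delta\geq 1$ (any $\delta\in(\nu/\mu,1]$ if $\mu>\nu$): by Corollary~\ref{cor:Z(infty)} one has $Y_1^\delta(t)\sim Z(\infty)e^{(\mu\delta-\nu)t}$ on non-extinction, so that mimicking the hitting-time estimate of Proposition~\ref{prop:Z(H0)} with the Yule driver of the ``killed'' process replaced by a constant-rate Poisson input (which only makes the argument easier), one obtains an $O(\log x_0)$ bound on the expected time to reach $\{X_0=0\}$, and concludes with a Lyapunov function of type $V(x_0,x_1)=\log(1+x_0)$ plus a bounded correction in $x_1$.

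The main obstacle is the Lyapunov construction in the first sub-case: the Poisson-equation solution $g$ for the linearly growing test function $\mu\delta(x_1\vee 1)-\lambda^*(\delta)$ grows polynomially in $x_1$ and is not sign-definite (indeed monotone with $g\to-\infty$), so the naive $V=x_0+g$ need not be nonnegative and its drift at the boundary $\{x_0=0\}$ has the ``wrong'' sign. One must therefore modify $V$---for example by regularising $g$ on a finite exception set, by adding a monotone correction, or by using an ``inverse'' Lyapunov function of the form $x_0\cdot h(x_1)$---so that Foster's inequality holds also on $\{x_0=0\}$. A secondary difficulty is the passage from $\Omega_\delta$ to $\Omega_r$: since (C) gives only pointwise convergence in $x_1$, the bad set $\{r<\delta\}$ need not be bounded, and one invokes the upper-bound coupling $X_1^r\leq Y_1^1$ of Proposition~\ref{lem:coupling} (tight under $\rho<1$) to argue that $(X_1^r(t))$ cannot linger in it.
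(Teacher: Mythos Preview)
Your proposal takes a route that is both different from and harder than the paper's, and the obstacles you flag are not technical footnotes but the crux of the matter.

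The paper does not attempt to produce a Lyapunov function with $\Omega_r V\leq -1$ pointwise. It uses instead the \emph{integrated} form of Foster's criterion (Theorem~9.7, not~8.6, of~\cite{Robert03}): for each region of the state space one exhibits a deterministic time $t$ with $\E_x(|X^r(t)|-|x|)<0$, where $|x|=x_0+x_1$. The identity
\[
\E_x\bigl(|X^r(t)|-|x|\bigr)=\lambda t-\nu\int_0^t\E_x\bigl(X_1^r(u)\bigr)\,du
\]
reduces everything to a \emph{lower} bound on $\E_x(X_1^r(u))$, which the second coupling of Proposition~\ref{lem:coupling} supplies: $X_1^r(u)\geq Y_1^\delta(u)$ for $u<\tau_\delta\wedge\sigma$. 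The heart of the proof is the estimate $\P_{(x_0,x_1)}(\tau_\delta\wedge\sigma\leq t)\to 0$ as $x_0\to\infty$ for each fixed $x_1$, obtained by combining condition~(C) with the upper coupling $X_1^r\leq Y_1^1$; a Cauchy--Schwarz bound controls the remainder. Both regimes $\mu>\nu$ and $\mu\leq\nu$ are handled by this same inequality, choosing $\delta$ so that either $(Y_1^\delta)$ is transient or $\lambda<\lambda^*(\delta)$. No Poisson equation, no sign analysis of $g$, no logarithm.

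Your plan, by contrast, has genuine gaps that are not easily closed within the infinitesimal framework. In the subcritical case the Poisson-equation solution $g$ tends to $-\infty$, so $V=x_0+g(x_1)$ is unbounded below and no additive constant repairs this; any monotone modification of $g$ destroys the exact cancellation that made $\Omega_\delta V$ constant. Even granting a pointwise bound $\Omega_r V\leq -\varepsilon$ on $\{r\geq\delta,\,x_0>0\}$, condition~(C) does \emph{not} force the complement of this set to be finite, and your remedy (``$X_1^r$ cannot linger there because $X_1^r\leq Y_1^1$'') is a process-level statement, not a generator inequality---at which point you have implicitly abandoned the infinitesimal criterion for an integrated one. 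In the case $\mu\geq\nu$, the function $\log(1+x_0)$ has infinitesimal drift of order $-(\mu r(x)(x_1\vee1)-\lambda)/(1+x_0)$, which is $O(1/x_0)$ for bounded $x_1$ and so cannot be $\leq -1$ outside any finite set; the $O(\log x_0)$ hitting-time bound you invoke is, again, an integrated estimate. Once one accepts working with $\E_x|X^r(t)|$ over a fixed horizon, the Poisson equation becomes superfluous and the paper's argument is the natural one.
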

\noindent 
Note that Condition~(C) is satisfied for the functions $r$ considered in  the models
considered by N\'u\~nez-Queija and Prabhu~ \cite{Queija08:0} and  in Susitaival
\etal~\cite{Susitaival06:0}. See above. 
\begin{proof}
If $x=(x_0,x_1)\in\R^2$, $|x|$ denotes the norm of $x$, $|x|=|x_0|+|x_1|$. 
The proof uses Foster's criterion as stated in Robert~\cite[Theorem~9.7]{Robert03}. If
there exist  constants $K_0$, $K_1$, $t_0$ and $t_1$ such that
\begin{align}
\sup_{x_1 \geq K_1} & \E_{(x_0,x_1)}(|X^r(t_1)| - |x|) < 0,\label{eq:foster-x2}\\
\sup_{x_1 < K_1, x_2 \geq K_2} & \E_{(x_0,x_1)}(|X^r(t_0)| - |x|) < 0,\label{eq:foster-x1}
\end{align}
then the Markov process $(X^r(t))$ is ergodic. 

Relation~\eqref{eq:foster-x2} is straightforward to establish: if $x_1 \geq K_1$, one gets, by
considering only $K_1$ of the $x_1$ initial servers in the second queue and the Poisson arrivals, that
\[ 
\E_{(x_0,x_1)}(|X^r(1)| - |x|) \leq \lambda - K_1 (1 - e^{-\nu}), 
\]
hence it is enough to take $t_1 = 1$ and $K_1 = (\lambda + 1) / (1-e^{-\nu})$ to have
Relation~\eqref{eq:foster-x2}. 

One has therefore to establish Inequality~\eqref{eq:foster-x1}. 
Let $\tau_\delta$ and $\sigma$  be the stopping times introduced in
Proposition~\ref{lem:coupling}, one first proves an intermediate result: for any $t>0$ and any
$x_1 \in \N$, 
\begin{equation}\label{Jacqueson}
\lim_{x_0\to+\infty} \P_{(x_0,x_1)}(\sigma\wedge \tau_\delta \leq t)=0.
\end{equation}
Fix $x_1 \in \N$ and $t \geq 0$: 
for $\eps>0$, there exists $D_1$ such that 
\[
\P_{x_1}\left(\sup_{0\leq s\leq t} Y_1^1(s)\geq D_1\right)\leq \eps,
\]
from Proposition~\ref{lem:coupling}, this gives the relation valid for all $  x_0\geq 0$,
\[
\P_{(x_0,x_1)}\left(\sup_{0\leq s\leq t} X_1^r(s)\geq D_1\right)\leq \eps.
\]
By  Condition~(C), there exists $\gamma \geq 0$ (that depends on $x_1$) such that
$ r(x_0,x_1) \geq \delta$ when $x_0 \geq \gamma$.
As long as $(X^r(t))$ stays in  the subset $\{(y_0,y_1): y_1\leq D_1\}$,  the transition
rates of the first component $(X_0^r(t))$ are uniformly bounded. Consequently, there
exists $K$ such that, for $x_0\geq K$, 
\[
\P_{(x_0,x_1)}\left[\sup_{s\leq t} X_0^r(s) \le \gamma,\ \sup_{s\leq t}X_1^r(s)\leq D_1, \right]\leq \eps.
\]
Relation~\eqref{Jacqueson} follows from the last two inequalities and the
identity
\[
\P_{(x_0,x_1)}(\sigma\wedge \tau_\delta \leq t)\leq \P_{(x_0,x_1)}\left(\sup_{s\leq t} X_0^r(s) \le \gamma\right).
\]

One returns to the proof of Inequality~\eqref{eq:foster-x1}.
By definition of the $Q$-matrix of the process $(X^r(t))$, 
\[ 
\E_{(x_0,x_1)}(|X^r(t|) - |x|) = \lambda t - \nu \int_0^t \E_{(x_0,x_1)}(X^r_1(u)) du,\ x \in \N^2, \ t \geq 0. 
\]
For any $x \in \N^2$,   there
exists a version of $(Y^\delta(t))$  with initial condition $Y^\delta(0)=X^r(0)=x$, and
such that Relation~\eqref{eq:coupling-2} holds for $t<\tau_\delta \wedge \sigma$, in
particular 
\begin{multline*} 
\E_x(X^r_1(t)) \geq \E_x(X^r_1(t); t< \tau_\delta \wedge \sigma)\\
\geq \E_x(Y_1^\delta(t); t < \tau_\delta \wedge \sigma) = \E_x(Y_1^\delta(t)) - \E_x(Y_1^\delta(t); t\geq \tau_\delta \wedge \sigma).
\end{multline*}
Cauchy-Schwarz inequality shows that for any $t \geq 0$ and $x \in \N^2$
\[
		\begin{split}
			\int_0^t \E_x(Y_1^\delta(u); \tau_\delta \wedge \sigma \leq u)\, du & \leq \int_0^t \sqrt{\E_{x}\left[\left(Y_1^\delta(u)\right)^2 \right]} \sqrt{\P_x(\tau_\delta \wedge \sigma\leq u)}\, du\\
			& \leq \sqrt{\P_x(\tau_\delta \wedge \sigma\leq t)} \int_0^t \sqrt{\E_{x}\left[\left(Y_1^\delta(u)\right)^2 \right]} \,du,
		\end{split}
\]
by gathering these inequalities,  and by using the fact that the process $(Y^\delta_1(t))$
depends only on $x_1$ and not $x_0$, one finally gets the relation 
\begin{equation} \label{eq:1}
\frac{1}{t} \E_x(|X(t)| - |x|)
\leq \lambda - \frac{\nu}{t} \int_0^t \E_{x_1}(Y_1^\delta(u)) \,du + c(x_1,t)\sqrt{\P_x(\tau_\delta \wedge \sigma \leq t)}
\end{equation}
with
\[ c(x_1, t) = \frac{\nu}{t}\int_0^t \sqrt{\E_{x_1}\left[\left(Y_1^\delta(u)\right)^2 \right]} du. \]

\noindent
Two cases are considered.
\begin{enumerate}
\item If $\mu > \nu$, if $\delta<1$ is such that $ \delta \mu > \nu$, the process $(Y_1^\delta(t))$
is transient, so that 
\[
\lim_{t\to+\infty} \frac{1}{ t} \int_0^t \E_{x_1}(Y_1^\delta(u))\, du=+\infty,
\]
for each $x_1 \geq 0$. 

\item If $\mu < \nu$, one takes $\delta = 1$, or if $\mu = \nu$, one takes $\delta < 1$ close enough to $1$ so that $\lambda < \lambda^*(\delta)$. In both cases, $\lambda < \lambda^*(\delta)$ and the process $(Y_1^\delta(t))$ converges in
distribution, hence
\[
\lim_{t\to +\infty}\frac{\nu}{t} \int_0^t \E_{x_1}(Y_1^\delta(u))\, du=\nu \E \left(Y_1^\delta(\infty)\right) = \lambda^*(\delta) > \lambda
\]
for each $x_1 \geq 0$. 
\end{enumerate}
Consequently in both cases,  there exist constants $\eta > 0$, $\delta<1$ and $t_0 > 0$ such that for any $x_1 \leq
K_1$, 
\begin{equation}\label{eq2}
\lambda - \frac{\nu}{t_0} \int_0^{t_0} \E_{x_1}(Y_1^\delta(u)) du \leq -\eta,
\end{equation}
with Relation~\eqref{eq:1}, one gets that if $x_1 \leq K_1$ then 
\[ 
\frac{1}{t_0} \E_x(|X(t_0)| - |x|) \leq -\eta + c^* \sqrt{\P_x(\tau_\delta \wedge \sigma  \leq t_0)}, 
\]
where  $c^* =  \max(c(n, t_0), 0 \leq n \leq K_1)$. By Identity~\eqref{Jacqueson}, there
exists $K_0$ such that, for all $x_0\geq K_0$ and $x_1\leq K_1$,  the relation
\[
c^*\sqrt{\P_{(x_0,x_1)}(\tau_\delta \wedge \sigma \leq t_0)}\leq \frac{\eta}{2}
\]
holds. This  relation and the inequalities~\eqref{eq2} and~\eqref{eq:1} give Inequality~\eqref{eq:foster-x1}.
The proposition is proved. 
\end{proof}

\subsection*{Another Boundary Condition}
The boundary  condition $x_1 \vee 1$  in the transition  rates of $(X(t))$,
Equation~\eqref{def},  prevents  the second coordinate from ending  up in the absorbing state
$0$. It amounts to suppose  that a permanent 
server gets activated when  no node  may offer the file. Another  way to avoid this
absorbing state  is to  suppose that a  permanent node  is always active,  which gives
transition rates with $x_1 + 1$ instead. This choice was for instance made
in N\'u\~nez-Queija and Prabhu~ ~\cite{Queija08:0}. All our  results apply for this other
boundary condition: the only difference that is when $\nu > \mu$,  the  value of  the
threshold $\lambda^*$ of Equation~\eqref{eq:lambda} 
is given by the quantity $\lambda^* = {\mu \nu}/{(\nu - \mu)}$.
\section{The Two-Chunk Network} \label{sec:net}
In this section it is assumed that a file of two chunks is distributed by the file-sharing
system corresponding  to Figure~\ref{chunk-fig}.  Chunks  are delivered in  the sequential
order.  The analysis  of this simple file-sharing system gives a  hint of the difficulties
one can encounter when dealing with multiple chunks, in particular with nodes distributing
{\em and} receiving chunks.

For  $k = 0, 1$ and  $t\geq  0$, the variable $X_{k}(t)$  denotes the  number
of  requests downloading the $(k{+}1)$st chunk; for $k = 2$, $X_{2}(t)$
is the number of requests  having all the chunks. When taking into account the boundaries
in the transition rates described in Figure~\ref{chunk-fig}, one gets the following
$Q$-matrix for the three-dimensional Markov process  $(X(t)) = (X_k(t), k = 0,1,2)$
\begin{multline*}
Q(f)(x)=\lambda[f(x+e_0)-f(x)]+ \mu_1(x_1\vee 1)[f(x+e_1-e_{0})-f(x)]\ind{x_{0}>0}\\+
\mu_2(x_2\vee 1)[f(x+e_2-e_{1})-f(x)]\ind{x_{1}>0}+\nu x_{2}[f(x-e_{2})-f(x)],
\end{multline*}
where $x\in\N^{3}$,  $f:\N^{3}\to\R_+$ is a  function and for,  $0\leq k\leq 2$,  $e_k \in
\N^{3}$ is the $k$th  unit vector. Note that, as before, to  avoid absorbing states, it is
assumed that there is a server for the $k$th chunk when $x_{k}=0$.

The stability behavior of $(X(t))$ depends on the values of the parameters $\mu_1$, $\mu_2$ and $\nu$. Three cases need to be distinguished: $(1)$~$\mu_1 > \mu_2 - \nu > 0$, $(2)$~$\mu_2 - \nu > \mu_1$ and $(3)$~$\mu_2 < \nu$. 
In each case, the method of proof is similar to the one used in the proof of
Proposition~\ref{prop:ergodicity} and relies on Foster's criterion as stated in
Robert~\cite[Theorem 9.7]{Robert03}. One investigates the case when the size $x_0+x_1+x_2$
of the initial state $X(0) = x = (x_0, x_1, x_2)$ is large. We briefly discuss the three
situations which will be  analyzed in the following. 

\begin{enumerate}
\item If $x_2$ is large, then the total number of customers decreases instantaneously.

\item If $x_1$ is large and $x_2$ is small, then the last queue $(X_2(t))$ behaves like the birth-and-death process $(Z(t))$ of Section~\ref{sec:Interacting}, which is transient if $\mu_2 > \nu$ and stable otherwise.

\item If $x_0$ is large and $x_1$ and $x_2$ are small, the two last queues $(X_1(t), X_2(t))$ behave like the system $(X^S(t)) = (X^S_1(t), X_2^S(t))$, defined by its generator
\begin{multline} \label{eq:saturated}
Q^S(f)(z)=\mu_{1}(z_{1}\vee 1)[f(z+e_{1})-f(z)]\\+  \mu_{2} (z_{2}\vee 1) 
[f(z+e_2-e_{1})-f(z)]\ind{z_{1}>0} +\nu z_2 [f(z-e_2)-f(z)],
\end{multline}
where  $z\in\N^{2}$   and  $f:\N^{2}\to\R_+$  is   an arbitrary  function. 
\end{enumerate}

Since $(X^S(t))$ corresponds to the two last  queues when the first queue is saturated, we
call it the \emph{saturated system}: it plays
a  crucial  for the  stability  of  $(X(t))$, in  particular  the  asymptotic behavior  of
$\E(X_2^S(t))$ determines the output of the  system when the first queue is saturated. The
properties of $(X^S(t))$ that will be needed are gathered in the next proposition.

\begin{proposition}[Saturated system]\label{prop:saturated}
	Let $x \in \N^2$.
	\begin{enumerate}
		\item If $\mu_1 > \mu_2 - \nu > 0$, then $\E_x(X_2^S(t)) \to +\infty$ as $t \to +\infty$.
		\item If $\mu_2 - \nu > \mu_1$, then $(X^S(t))$ is positive recurrent.
		\item If $\nu > \mu_2$, then $\nu \E_x(X_2^S(t)) \to \lambda^*$ as $t \to +\infty$, with $\lambda^*$ given by
		\begin{equation} \label{eq:lambda-2}
			\lambda^* = \frac{\mu_2}{(1-\mu_2/\nu) (1 - \log(1 - \mu_2/\nu))}.
		\end{equation}
	\end{enumerate}
\end{proposition}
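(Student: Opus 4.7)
The key observation is that, before $X_1^S(t)$ first hits zero, the pair $(X_1^S, X_2^S)$ is exactly the interacting pair $(W, Z)$ of Section~\ref{sec:Interacting} under the identification $\mu_W = \mu_1$ and $\mu_Z = \mu_2$; once $X_1^S$ reaches zero, it waits an exponential~$(\mu_1)$ time during which $X_2^S$ undergoes only deaths at individual rate $\nu$, and the interacting dynamics then restart. Each of the three cases of the proposition will follow from this identification together with the results already established for $(W,Z)$.

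For case~(i), $\mu_1 > \mu_2 - \nu > 0$: Corollary~\ref{cor:series} with $\gamma = \mu_1$ shows $\E_x\sum_n e^{-\mu_1\sigma_n} < +\infty$, and Proposition~\ref{KillProp} then yields $\P_x(X_1^S(t) > 0,\ \forall t \geq 0) > 0$; on this event, $X_2^S$ coincides with $Z$ and $Z(t) \to +\infty$ almost surely since $\mu_2 > \nu$, by Corollary~\ref{cor:Z(infty)}. Fatou's lemma applied to $X_2^S(t)\ind{X_1^S > 0 \text{ throughout}}$ gives $\E_x(X_2^S(t)) \to +\infty$. For case~(iii), $\nu > \mu_2$: a monotone coupling in the spirit of Proposition~\ref{lem:coupling} gives $X_2^S(t) \leq Z(t)$ pathwise, so the birth instants of $X_2^S$ dominate those of $Z$; Corollary~\ref{cor:series} makes $\sum e^{-\mu_1\sigma_n^{Z}}$ integrable, hence the killing instants of $X_1^S$ form a summable series, and Proposition~\ref{KillProp} combined with a regeneration argument upgrades positive-probability survival to $\P_x(X_1^S(t) > 0 \text{ eventually}) = 1$. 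From some random time onwards, $X_2^S$ evolves exactly as the ergodic process $Z$ and converges in distribution to $Z(\infty)$; the $L^2$-bounds on $Z(t)$ available from the $M/M/1$ representation of Proposition~\ref{prop:queueing}, together with the pathwise domination, give the uniform integrability needed to convert distributional convergence into convergence of expectations, and $\nu\E(Z(\infty)) = \lambda^*$ is precisely~\eqref{eq:lambda} with $\delta = 1$ and $\mu = \mu_2$.

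For case~(ii), $\mu_2 - \nu > \mu_1$, Proposition~\ref{prop:V} is the main tool. Set $T_0 = 0$ and inductively define $T_{n+1}$ as the first time after $T_n$ at which $X_1^S$ returns to zero after at least one birth, and let $V_n = X_2^S(T_n^+)$. The strong Markov property identifies the transition $V_n \mapsto V_{n+1}$ with the recursion~\eqref{eqV}: the number of type-$2$ particles alive when $X_1^S$ next hits zero is distributed as $Z(H_0)$ under $\P_{(1, V_n)}$, while the Bernoulli thinning with parameter $p = \mu_1/(\mu_1+\nu)$ is the fraction of these particles that survive the subsequent exponential~$(\mu_1)$ waiting time at $X_1^S = 0$, each surviving with probability $\E(e^{-\nu \tau}) = \mu_1/(\mu_1+\nu)$. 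Proposition~\ref{prop:V} then yields positive recurrence of $(V_n)$ with $\E(\log(1+V_n))$ bounded in stationarity, while Proposition~\ref{prop:Z(H0)}(ii) bounds the expected cycle length by $\E(T_{n+1}-T_n\mid V_n) \leq C(\log(1+V_n)+1) + 1/\mu_1$; a standard regenerative argument then upgrades positive recurrence of the embedded chain $(V_n)$ to positive recurrence of the continuous-time Markov process $(X^S(t))$.

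The main technical obstacle is the almost-sure eventual positivity of $X_1^S$ in case~(iii), which requires combining Proposition~\ref{KillProp} with a regeneration argument to pass from positive probability to probability one, together with the uniform integrability estimates needed to convert distributional convergence of $X_2^S(t)$ into convergence of its expectation; the identification of the regenerative recursion in case~(ii) is conceptually clean but demands careful bookkeeping to verify that the offspring law and Bernoulli thinning truly match the recursion~\eqref{eqV} of Proposition~\ref{prop:V}.
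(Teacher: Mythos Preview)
Your overall strategy matches the paper's exactly: couple $(X_1^S, X_2^S)$ with $(W,Z)$ before $X_1^S$ hits~$0$, use Corollary~\ref{cor:series} and Proposition~\ref{KillProp} for cases~(1) and~(3), and analyze an embedded chain via Proposition~\ref{prop:V} for case~(2). Your handling of cases~(1) and~(3) is in fact more explicit than the paper's sketch (you invoke Fatou and uniform integrability where the paper is terse).

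In case~(2), however, the bookkeeping you flag as delicate does go wrong. You embed at the hitting times $T_n$ of $\{X_1^S = 0\}$, so $V_n = X_2^S(T_n)$. But from $T_n$ the process \emph{first} waits an exponential$(\mu_1)$ time (during which $V_n$ is thinned) and \emph{then} runs as $(W,Z)$ from $(1,\text{thinned } V_n)$ until the next visit to~$0$; hence the actual recursion is $V_{n+1} = A_n\bigl(\text{thin}(V_n)\bigr)$, not $\text{thin}\bigl(A_n(V_n)\bigr)$ as you describe, and this is not Relation~\eqref{eqV}. The paper embeds instead at the times $S_n$ when $X_1^S$ jumps from~$0$ to~$1$: then $X_2^S(S_{n+1})$ is obtained by running $(W,Z)$ from $(1, X_2^S(S_n))$ to $Z(H_0)$ and \emph{then} thinning over the idle period, which is exactly~\eqref{eqV}. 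The two embedded chains are shifts of one another, so either choice ultimately works, but only the paper's matches~\eqref{eqV} as written. A second slip: your cycle bound $\E(T_{n+1}-T_n \mid V_n) \leq C(\log(1+V_n)+1)+1/\mu_1$ misreads Proposition~\ref{prop:Z(H0)}(ii), whose bound is in terms of $w = W(0)$, not $z = Z(0)$; here $w=1$, so the expected cycle length is bounded by a constant uniform in~$V_n$ (the paper uses the domination $\E_{(1,z)}(H_0)\leq \E_{(1,0)}(H_0)$ to the same effect). Incidentally, your value $p = \mu_1/(\mu_1+\nu)$ for the survival probability is correct---the paper's stated $p = \nu/(\nu+\mu_1)$ appears to be a typo, though either value lies in $(0,1)$ and Proposition~\ref{prop:V} only needs that.
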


\begin{proof}
	The crucial observation is that as long as $X_1^S(t) > 0$, the process $(X^S(t))$ can be coupled with the process $(W(t), Z(t))$ of the last part of Section~\ref{sec:Interacting}, i.e., $(X_2^S(t))$ behaves like a birth-and-death process and $(X_1^S(t))$ is a Yule process with particles killed at the instants of birth of $(X_2^S(t))$.
	
	More formally, in the sequel let $(Z(t))$ be the process with $Q$-matrix defined by Relation~\eqref{eqZ} with $\mu_Z = \mu_{2}$, and define $(\sigma_n)$ its sequence of birth times. Let $(W(t))$ be a Yule process with parameter $\mu_1$ killed at $(\sigma_n)$. Then $(X^S(t))$ and $(W(t), Z(t))$ can be coupled so that
	\[ (X_1^S(t), X_2^S(t), 0 \leq t \leq T) = (W(t), Z(t), 0 \leq t \leq T) \]
	with $T = \inf\{ t \geq 0: X_1^S(t) = 0 \}$.  We give a full proof of the two first cases, the last case uses similar techniques and so we leave the details to the reader.
	%
	
\bigskip
	\noindent\textit{Proof of case~$(1)$.} 
	Since $\mu_2 > \nu$, the process $(e^{-(\mu_{2}{-}\nu)t}Z(t))$ converges almost surely to a finite and positive random variable
	$M_{Z}(\infty)$ by Corollary~\ref{cor:Z(infty)}. Moreover, since $\mu_{1} > \mu_{2} - \nu > 0$, Corollary~\ref{cor:series} entails that
	\[
	\sum_{n \geq 1} e^{-\mu_{1} \sigma_n} < +\infty,\quad \text{almost surely.}
	\]
	By Proposition~\ref{KillProp}, this shows that $(W(t))$ survives with positive probability. In this event, say ${\cal E}$, it holds that $W(t) \geq 1$ at all times and
	\[
	\lim_{t\to +\infty} Z(t) = +\infty.
	\]
	In the event ${\cal E}$, $W(t) \geq 1$ and therefore $(X_1^S(t), X_2^S(t)) = (W(t), Z(t))$: in particular, we have exhibited an event of positive probability where $X_2^S(t) \to +\infty$, which proves the claim. Note that we implicitly assumed that no coordinate of the initial state is zero, these cases being dealt with easily. 	

\bigskip
	\noindent \textit{Proof of case~$(2)$.} 
	This is the most delicate case. 
	By Proposition~\ref{prop:Z(H0)} and since $\mu_2 - \nu > \mu_1$, one has that $(X^S_{1}(t))$ returns infinitely often to
	$0$.  When  $(X^S_{1}(t))$ is  at $0$  it jumps to $1$ after  an exponential  time with
	parameter $\mu_1$. One defines the sequences $(H_{0,n})$, $(E_{\mu,n})$ and $(S_n)$ by induction: 
	%
	$S_{0} = 0$ by convention, and for $n \geq 1$, $S_n = \sum_{k=1}^n (H_{0,k} + E_{\mu, k})$,
	\[ H_{0,n} = \inf\{t \geq S_{n-1}: X_1^S(t) = 0\} \text{ and } E_{\mu, n} = \inf\{ t \geq S_{n-1} + H_{0,n}: X_1^S(t) = 1 \}. \]
	The sequence $(E_{\mu,n})$ represents the successive times at $0$, $(S_n)$ the times at which $(X_1^S(t))$ hits $0$, and $(H_{0,n})$ the times needed for $(X_1^S(t))$ to hit $0$. On the time intervals $[S_n, S_n + H_{0,n+1}]$, $(X^S(t))$ can be coupled with a version of $(W(t), Z(t))$. This shows that, with the notations of Proposition~\ref{prop:Z(H0)}, $X_2^S(S_n)$ is equal in distribution to $Z(H_{0})$ with the initial condition $(W(0), Z(0)) = (X_1^S(S_{n-1}), X_2^S(S_{n-1}))$. Then while $X_{1}^S$ is at $0$, the dynamics of $X_{2}^S$ is simple, since it just empties. These two remarks show that $X_2(S_{n+1})$ is equal in distribution to
	\[
	\left( X_{2}^S(S_{n+1}) \, | \, X^S(S_n) = x \right) \stackrel{\text{dist.}}{=} \left( \sum_{i=1}^{Z(H_{0})} \ind{E_{\nu,i}> E_{\mu_1}} \, \big | \, (W(0), Z(0)) = x \right),
	\]
	with $(E_{\nu,i})$ i.i.d., exponentially distributed with parameter $\nu$ and independent of $E_{\mu_1}$. For $i \geq 1$, $E_{\nu, i}$ represents the residence time of the $i$th customer present in the second queue at time $S_n + H_{0,n+1}$. 
	
	Consequently, and since $X_1^S(S_n) = 1$ for $n \geq 1$, $(X_2^S(S_n), n \geq 1)$ is a Markov chain whose transitions are defined by Relation~\eqref{eqV} with $p = \nu / (\nu + \mu_1)$; note that $(X_2^S(S_n), n \geq 0)$ has the same dynamics only when $X_{1}^S(0) = 1$. In the sequel define for any $K > 0$ the stopping $T_K$:
	%
	\[
	T_K = \inf\{n \geq 0: X_2^S(S_n) \leq K, X_1^S(S_n) = 1 \}.
	\]
	The ergodicity of $(X^S(t))$ will follow from the finiteness of $\E_{(x_1,x_2)}(T_K)$ for some~$K$ large enough and for arbitrary $x = (x_1, x_2) \in \N^2$: indeed, $T_K$ is greater than the time needed to return to the finite set $\{ x_1, x_2: x_1 = 1, x_2 \leq K \}$. The strong Markov property of $(X^S(t))$ applied at time $S_1$ gives
	\[ \E_{(x_1, x_2)}(T_K) \leq 2\E_{(x_1, x_2)}(S_1) + \E_{(x_1, x_2)} \left[ \E_{(1,X_{2}^S(S_1))}(T_K) \right], \]
	and so one only needs to study $T_K$ conditioned on $\{X_{1}^S(0) = 1\}$ since $\E_{(x_1, x_2)}(S_1)$ is finite in view of Proposition~\ref{prop:Z(H0)}. Thus one can assume that $(X_2^S(S_n), n \geq 0)$ is a time-homogeneous Markov chain whose transitions are defined by Relation~\eqref{eqV}, and 
%
	with $N_K$ defined in Proposition~\ref{prop:V}, the identity
	\begin{equation}\label{Dauv}
	T_K = \sum_{i=1}^{N_K} (H_{0,i}+E_{\mu_1,i})
	\end{equation}
	holds. For $i \geq 0$, the Markov property of $(X_2^S(S_n), n \geq 0)$ gives for any $x_2 \geq 0$
	\[
	\E_{(1, x_2)}\left(H_{0,i}\ind{i \leq N_K}\right)=\E_{(1, x_2)}\left(\E_{(1,X_2^S(S_i))}\left(H_{0}\right)\ind{i \leq N_K}\right).
	\]
	With the same argument as in the proof of Proposition~\ref{prop:V}, one has
	\[
	\E_{(1,X_2^S(S_i))}(H_0) \leq \E_{(1,0)}(H_0) < +\infty,
	\]
	with Equations~\eqref{Dauv} and~\eqref{eq:N} of Proposition~\ref{prop:V}, one gets that for some $\gamma > 0$ and some $K > 0$,
	\[
	\E_{(x_1, x_2)}(T_K) \leq 2 \E_{(x_1, x_2)}(S_1) + C \E_{(x_1, x_2)}\left[\log\left(1+X_{2}^S(S_1)\right)\right]
	\]
	with the constant $C = (\E_{(1,0)} (H_0)+1 / \mu_2) / \gamma$. This last term is finite for
	any $(x_1, x_2)$ in view of Proposition~\ref{prop:Z(H0)}, which proves the proposition. 
	
\bigskip
	\noindent\textit{Sketch of proof of case~$(3)$.} Since $\mu_1 > 0 > \mu_2 - \nu$, Corollary~\ref{cor:series} shows that $\sum_{n \geq 1} e^{-\mu_1 \sigma_n}$ is finite almost surely, and Proposition~\ref{KillProp} implies that $(X_1^S(t))$ survives with positive probability. Using the fact that the process $(Z(t))$ is stable since $\mu_2 < \nu$, together with similar arguments as before, one can show that the process $(X_1^S(t))$ actually survives almost surely. Thus after some time $(X_2^S(t))$ can be coupled forever with $(Z(t))$, and $\lambda^*$ is precisely the value of $\nu \E(Z(\infty))$ in stationary regime.
\end{proof}

\begin{proposition}
	Let $(X(t)) = (X_0(t), X_1(t), X_2(t))$ be the Markov process describing the linear file-sharing network with parameters $\lambda, \mu_1, \mu_2$ and $\nu$.
	\begin{enumerate}
		\item If $\mu_1 > \mu_2 - \nu > 0$, then $(X(t))$ is ergodic for any $\lambda > 0$.
		\item If $\mu_2 - \nu > \mu_1$, let $\lambda^S$ be the value of the expectation of $\nu X_2^S$ at equilibrium, which exists by Proposition~\ref{prop:saturated}. Then $(X(t))$ is ergodic for $\lambda < \lambda^S$ and transient for $\lambda > \lambda^S$.
		\item If $\nu > \mu_2$, then $(X(t))$ is ergodic for $\lambda < \lambda^*$ and transient for $\lambda > \lambda^*$, with $\lambda^*$ given by~\eqref{eq:lambda-2}.
	\end{enumerate}
\end{proposition}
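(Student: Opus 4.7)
The plan is to combine Foster's criterion for ergodicity (as in the proof of Proposition~\ref{prop:ergodicity}) with a coupling argument for transience (as in Proposition~\ref{prop:transience}), letting the saturated system $(X^S(t))$ of Proposition~\ref{prop:saturated} play the role that the free process played for the single-chunk network. The essential observation is that departures from the whole system happen only from queue~$2$, so the drift of $V(x) = |x| = x_0 + x_1 + x_2$ at time $t_0$ takes the simple form
\[
\E_x(|X(t_0)| - |x|) = \lambda t_0 - \nu \int_0^{t_0} \E_x(X_2(s))\, ds.
\]
Ergodicity will follow from Foster's criterion provided one can exhibit $t_0>0$, $\eta>0$ and thresholds $K_0,K_1,K_2$ such that this drift is $\leq -\eta t_0$ in each of the three regions (i) $x_2\geq K_2$, (ii) $x_2<K_2$ and $x_1\geq K_1$, and (iii) $x_2<K_2$, $x_1<K_1$ and $x_0\geq K_0$.

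Region~(i) is straightforward: bounding $X_2(s)$ from below by $K_2$ independent Exp$(\nu)$ lifetimes, one gets drift at most $\lambda t_0 - K_2(1-e^{-\nu t_0})$, negative when $K_2$ is large enough. Region~(iii) is the heart of the argument and essentially mimics the proof of Proposition~\ref{prop:ergodicity}: one couples $(X_1,X_2)$ with the saturated system $(X_1^S,X_2^S)$ up to the stopping time $\sigma = \inf\{t\geq 0:X_0(t)=0\}$, and handles the coupling error via Cauchy--Schwarz to obtain
\[
\frac{1}{t_0}\E_x(|X(t_0)|-|x|) \leq \lambda - \frac{\nu}{t_0}\int_0^{t_0}\E_{(x_1,x_2)}\bigl(X_2^S(s)\bigr)\,ds + c(x_1,x_2,t_0)\sqrt{\P_x(\sigma\leq t_0)}.
\]
The probability $\P_x(\sigma \leq t_0)$ can be made arbitrarily small by taking $x_0$ large, since in bounded regions of $(x_1,x_2)$ the jump rates of $(X_0(t))$ are uniformly bounded, exactly as in~\eqref{Jacqueson}. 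By Proposition~\ref{prop:saturated}, the Ces\`aro average $\frac{\nu}{t_0}\int_0^{t_0}\E(X_2^S(s))\,ds$ tends to $+\infty$ in case~(1), to $\lambda^S$ in case~(2) and to $\lambda^*$ in case~(3), each strictly exceeding $\lambda$ under the hypotheses; choosing $t_0$ large and then $K_0$ large yields the desired negative drift. For region~(ii), one couples $(X_1,X_2)$ with the interacting pair $(W(t),Z(t))$ of Section~\ref{sec:Interacting} as long as $X_1>0$: in cases~(1) and~(2) the birth-and-death process $(Z(t))$ is transient and $X_2$ overshoots $K_2$ with high probability, reducing to region~(i); in case~(3) the process $(Z(t))$ is ergodic with $\nu\E(Z(\infty))=\lambda^*>\lambda$, and the same Cauchy--Schwarz argument supplies the required drift.

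For transience in cases~(2) and~(3) when $\lambda$ exceeds $\lambda^S$ or $\lambda^*$, the argument of Proposition~\ref{prop:transience} carries over: writing $X_0(t) = X_0(0) + \mathcal{N}_\lambda(t) - D(t)$, where $D(t)$ counts the transitions from queue~$0$ to queue~$1$, one couples from above with $(X^S(t))$ to get $D(t)\leq D^S(t)$ where $D^S(t)$ is the corresponding counting process in the saturated system, and the ergodic theorem applied to $(X^S(t))$ gives $D^S(t)/t \to \lambda^S$ (respectively~$\lambda^*$). Hence $\liminf_{t\to\infty} X_0(t)/t \geq \lambda - \lambda^S > 0$, which implies transience. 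The main obstacle lies in region~(iii) of the ergodicity proof: one must simultaneously ensure that $\sigma$ exceeds $t_0$ with high probability and that the saturated process has realized its ergodic average (or, in case~(1), has grown sufficiently) on this same time interval. This hinges on the quantitative return-time estimate $\E(N_K\mid V_0=v)\leq\gamma^{-1}\log(1+v)$ from Proposition~\ref{prop:V}, which guarantees that the saturated system approaches its equilibrium in logarithmic time, thus allowing the averaging to take effect well before $X_0$ risks hitting zero.
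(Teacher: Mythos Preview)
Your overall architecture — Foster's criterion with the Lyapunov function $|x|$, the three-region decomposition, and the coupling with $(X^S(t))$ in region~(iii) — is exactly the paper's approach, and the ergodicity part is essentially correct.  There are, however, two genuine problems in what you wrote.

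\medskip
\noindent\textbf{Transience in case~(3).}  Your argument ``the ergodic theorem applied to $(X^S(t))$ gives $D^S(t)/t\to\lambda^*$'' breaks down here: by Proposition~\ref{prop:saturated}(3) the saturated system is \emph{not} ergodic when $\nu>\mu_2$; in fact $X_1^S(t)\to\infty$ almost surely, so the input counting process $D^S(t)$ has rate $\mu_1(X_1^S\vee 1)\to\infty$ and $D^S(t)/t\to\infty$, not $\lambda^*$.  The paper's route is different and simpler: one dominates $X_2(t)$ by the ergodic birth--and--death process $(Z(t))$ (this coupling \emph{is} monotone), so the departure rate from the whole system is at most $\nu\E(Z(\infty))=\lambda^*$, whence $\liminf |X(t)|/t\geq\lambda-\lambda^*>0$.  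Note that this gives growth of $|X(t)|$, not of $X_0(t)$; in case~(3) it is actually $X_1$ that blows up.

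\medskip
\noindent\textbf{The coupling $D(t)\leq D^S(t)$ in case~(2).}  Even in case~(2) the global inequality $D(t)\leq D^S(t)$ does not follow from a monotone coupling: the natural ordering $X_1\leq X_1^S$, $X_2\leq X_2^S$ is \emph{not} preserved (when $X_1=X_1^S$ and $X_2<X_2^S$, the rate of $1\!\to\!2$ transitions is larger for $X^S$, so $X_1^S$ can drop below $X_1$).  The clean argument is the one implicit in the paper's sketch: couple \emph{exactly}, $(X_1,X_2)=(X_1^S,X_2^S)$ on $\{t<\sigma\}$, observe that on this event $X_0(t)=x_0+\mathcal N_\lambda(t)-D^S(t)$ with $D^S(t)/t\to\lambda^S<\lambda$ by ergodicity, hence $\sup_t(D^S(t)-\mathcal N_\lambda(t))<\infty$ a.s., and conclude $\P_x(\sigma=\infty)>0$ for $x_0$ large.

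\medskip
\noindent\textbf{On your last paragraph.}  The appeal to Proposition~\ref{prop:V} is misplaced.  That estimate is used \emph{inside} the proof of Proposition~\ref{prop:saturated}(2) to establish positive recurrence of $(X^S(t))$; it is not needed again in region~(iii).  The ``obstacle'' you describe does not exist: since $(x_1,x_2)$ ranges over the finite set $\{x_1<K_1,\,x_2<K_2\}$, one first fixes $t_0$ so that $\frac{\nu}{t_0}\int_0^{t_0}\E_{(x_1,x_2)}(X_2^S(u))\,du>\lambda+\eta$ for every such $(x_1,x_2)$, and only \emph{afterwards} chooses $K_0$ so that $\P_x(\sigma\leq t_0)$ is small.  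No rate of convergence is required.
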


In the second case, preliminary investigations seem to suggest that $\lambda^S = +\infty$ is possible. In such situations, the stability region would actually be infinite. The intuitive reason is that $(X_2^S(t))$ grows exponentially fast in periods where $X_1^S(t) > 0$, and for a duration
$H_0$. Thus when $(X_1^S(t))$ hits $0$, $X_2^S(H_0) \approx e^{(\mu_2 - \nu) H_0}$, and although some exponential moments of $H_0$ are finite by~\eqref{eq:exp(H_0)}, it seems plausible that $\E(e^{(\mu_2-\nu)H_0}) = +\infty$ sometimes.

\begin{proof}
	In all three cases the proof relies on Foster's criterion as stated in
	Theorem~$9.7$ of Robert~\cite{Robert03}. Let $X(0)=x=(x_0, x_1, x_2)\in\N^{3}$, assume that $\lVert x \rVert = x_0 + x_1 + x_2$ is large. First we inspect the case when $x_2$ is large, it is common to all the cases. 

	Since the last queue serves at rate $\nu$ each request, for $t\geq 0$,
	\[
	\E_x(\|X(t)\|)\leq \|x\|+ \lambda t- x_2\left(1-e^{-\nu t}\right).
	\]
	Define $t_2 = 1$ and let $K_2$ be such that $\lambda-K_2(1-e^{-\nu})\leq -1$, then the relation
	\[
	\E_x(\|X(t_2)\|)- \|x\|\leq -1,
	\]
	holds when $x_2 \geq K_2$, for all $x_0, x_1 \geq 0$. In the sequel the constants $t_2$ and $K_2$ are fixed, and the process $(Z(t))$ is defined similarly as in the proof of Proposition~\ref{prop:saturated}. We give a full proof of case~$(1)$, the other cases use similar techniques and so we leave the details to the reader.

\bigskip
	\noindent\textit{Proof of case~$(1)$.} Assume that $\mu_1 > \mu_2 - \nu > 0$: in particular $(Z(t))$ is transient, thus there exists some $t_{1}$ such that for any $x_2 < K_2$,
	\[
	\nu \int_0^{t_{1}}\E_{x_2}\left(Z(u)\right)\,du\geq \lambda t_{1} + 2.
	\]
	The two processes $(Z(t))$ and $(X(t))$ can be built on the same probability space such that if they start from the same initial state, then 
	the two processes $(Z(t))$ and $(X_2(t))$ are
	identical as long as $X_{1}(t)$ stays positive. Since moreover the hitting time $\inf\{t
	\geq 0: X_{1}(t) = 0 \}$ goes to infinity as $x_{1}$ goes to infinity for any $x_2 < K_2$, one gets that there exists $K_{1}$ such that 
	if $x_{1} \geq K_{1}$ and $x_2 < K_2$, then the relation 
	\begin{align*}
	\E_x(\|X(t_{1})\|)- \|x\| & = \lambda t_{1} - \nu \int_0^{t_{1}} \E_{x}(X_2(u))\, du\\
	& \leq \lambda t_{1} - \left( \nu\int_0^{t_{1}} \E_{x_2} \left(Z(u)\right)\,du - 1\right) \leq -1
	\end{align*}
	holds.
	
	In a similar way, by saturating the first queue and coupling the two last queues $(X_1(t), X_2(t))$ with the saturated system $(X^S(t))$, one gets in view of case~$(1)$ of Proposition~\ref{prop:saturated} that there exist constants $t_{0}$ and $K_{0}$ such that  if $x_2 < K_2$,
	and $x_{1} < K_{1}$ and $x_{0} \geq K_{0}$, then  
	\[
	\E_x(\|X(t_{0})\|)- \|x\|\leq  -1,
	\]
	which ends the proof of this case.

\bigskip
	\noindent\textit{Sketch of proof of case~$(2)$.} Since $\mu_2 > \nu$, the process $(Z(t))$ is still transient and one can choose the same constants $t_1, K_1$ as in the previous case to deal with the case $x_1 \geq K_1, x_2 < K_2$. When $x_0 \geq K_0$ and $x_1 < K_1, x_2 < K_2$, then one can justify for $K_0$ and $t_0$ large enough the approximation
	\[ \E_x(\lVert X(t_0) \rVert) - \lVert x \rVert \approx \lambda t_0 - \nu \int_0^{t_0} \E_{(x_1, x_2)}(X_2^S(u)) \, du \approx (\lambda - \lambda^S) t_0. \]
	Hence for $\lambda < \lambda^S$, the stability result follows. For $\lambda > \lambda^S$, this suggests that the first coordinated $(X_0(t))$ builds up, and similarly as in the proof of Proposition~\ref{prop:transience} one can show that indeed in this case
	\[ \liminf_{t \to +\infty} \frac{X_0(t)}{t} \geq \lambda - \lambda^S > 0,\ \text{ almost surely}.  \]

\bigskip	
	\noindent\textit{Sketch of proof of case~$(3)$.} Since $\nu > \mu_2$, the process $(Z(t))$ is stable, and it acts as a bottleneck on the system. When $x_0 < K_0$ and $x_1 \geq K_1$, then one can as before justify the approximation, for $K_1$ and $t_1$ large enough,
	\[ \E_x(\lVert X(t_1) \rVert) - \lVert x \rVert \approx \lambda t_1 - \nu \int_0^{t_1} \E_{x_2}(Z(u)) \, du \approx (\lambda - \lambda^*)t_1. \]
	Similarly, the third case of Proposition~\ref{prop:saturated} shows that when $x_0 \geq K_0$ and $x_1 < K_1, x_2 < K_2$, again for $K_0$ and $t_0$ large enough,
	\[ \E_x(\lVert X(t_0) \rVert) - \lVert x \rVert \approx \lambda t_0 - \nu \int_0^{t_0} \E_{(x_1, x_2)}(X_2^S(u)) \, du \approx (\lambda - \lambda^*) t_0. \]
	This shows that the system is ergodic for $\lambda < \lambda^*$. The transience in the other case $\lambda > \lambda^*$ follows easily from the fact that the last queue $(X_2(t))$ is stochastically dominated by $(Z(t))$, thus $\lambda^*$ is the maximal output rate of the system.
\end{proof}

\appendix

\section{Proof of Proposition~\protect\ref{lemma:births}}
In this appendix the notations of Section~\ref{sec:Interacting} are used.
Since the random variable $(B_\sigma(t)\mid Z(0)=0)$ is stochastically smaller than $(B_\sigma(t)\mid
Z(0)=z)$ for any $z \in \N$, it is enough to show that for $\eta < \eta^*(\nu / \mu_Z)$ 
\[
\E_0 \left[ \sup_{t \geq \sigma_1} \left(e^{\eta\alpha t} B_\sigma(t)^{-\eta} \right) \right] < +\infty,
\]
where $\alpha = \mu_Z - \nu > 0$. 

Note that the process $(B_\sigma(t + \sigma_1), t \geq 0)$ under $\P_0$ has the same 
distribution as $(B_\sigma(t)+1, t \geq 0)$ under $\P_1$, and by independence of
$\sigma_1$, an exponentially random variable with  parameter $\mu_Z$, 
and $(B_\sigma(t+\sigma_1), t \geq 0)$, one gets  
\[
\E_0 \left[ \sup_{t \geq \sigma_1} \left(e^{\eta\alpha t} B_\sigma(t)^{-\eta} \right) \right] = \E_0 \left( e^{\eta \alpha \sigma_1} \right) \E_1 \left[ \sup_{t \geq 0} \left(e^{\eta\alpha t} \left(B_\sigma(t)+1\right)^{-\eta} \right) \right].
\]
Since $\alpha < \mu_Z$ and $\eta^*(\nu / \mu_Z)<1$, then $\E_0 \left( \exp(\eta \alpha \sigma_1)
\right)$ is finite, and all one needs to prove is that the second term is finite as
well. 

Define $\tau$ as the last time $Z(t) = 0$: 
\[ \tau = \sup\{t \geq 0: Z(t) = 0\}, \]
with the convention that $\tau = +\infty$ if $(Z(t))$ never returns to $0$. Recall that,
because of the assumption $\mu_Z>\nu$, with probability $1$, the process $(Z(t))$ returns
to $0$ a finite number of times. 

Conditioned on the event  $\{\tau =  +\infty\}$, the  process $(Z(t))$  is  a $(p,\lambda)$-branching
process conditioned on survival, with $\lambda  = \mu_Z+\nu$ and $p = \mu_Z/\lambda$. Such
a branching  process conditioned on survival can  be decomposed as $Z=Z_{(1)}  + Y$, where
$(Y(t))$  is   a  Yule  process  $(Y(t))$   with  parameter  $\alpha$.   See  Athreya  and
Ney~\cite{Athreya72:0}.  Consequently, for any $0 < \eta < 1$,
\[
\E_1 \left[ \sup_{t \geq 0} \left( e^{\eta\alpha t} \left(B_\sigma(t)+1\right)^{-\eta} \right) | \, \tau = +\infty \right] 
\leq \E_1 \left[ \sup_{t \geq 0} \left( e^{\eta\alpha t} Y(t)^{-\eta} \right) \right].
\]
Since the $n$th split time $t_n$ of $(Y(t))$ is distributed like the maximum of $n$
i.i.d.\ exponential random variables, $Y(t)$ for $t \geq 0$ is geometrically distributed
with parameter $1-e^{-\alpha t}$, hence,
\begin{align*}
\sup_{t\geq 0}\left[e^{\eta \alpha t} \E_1 \left( \frac{1}{Y(t)^{\eta}} \right)\right] &= 
\sup_{t\geq 0} \left[ e^{-(1- \eta) \alpha t}
\sum_{k \geq 1}\frac{(1-e^{-\alpha t})^{k-1}}{k^{\eta}}\right]
\\&\leq \sup_{0\leq u\leq 1}\left[(1-u)^{1-\eta} \sum_{k \geq 1}\frac{u^{k-1}}{k^{\eta}} \right].
\end{align*}
For $0< u< 1$, the relation
\begin{align*}
(1-u)^{1-\eta} \sum_{k \geq 1}\frac{u^{k-1}}{k^{\eta}}
&\leq (1-u)^{1-\eta} \int_0^\infty \frac{u^{x}}{(1+x)^{\eta}}\, dx,\\
&= \left(\frac{1-u}{-\log u}\right)^{1-\eta} \int_0^\infty \frac{e^{-x}}{(x-\log u)^{\eta}}\, dx,
\end{align*}
holds, hence
\[
\sup_{t\geq 0}\left[e^{\eta \alpha t} \E_1 \left( \frac{1}{Y(t)^{\eta}} \right)\right] <+\infty.
\]
The process $(e^{-\alpha t} Y(t))$ being a martingale,  by convexity the process
$(e^{\eta\alpha t} Y(t)^{-\eta})$ is a non-negative sub-martingale. For any $\eta\in(0,1)$ Doob's
$L_p$ inequality gives the existence of a finite $q(\eta) > 0$ such that 
\[
\E_1 \left[ \sup_{t \geq 0} \Big( e^{\eta \alpha t} Y(t)^{-\eta} \Big) \right] \leq q(\eta)  \sup_{t \geq 0} \left[ e^{\eta \alpha t} \E_1 \left( \frac{1}{Y(t)^{\eta}} \right) \right]<+\infty.
\]
The following result has therefore been proved.
\begin{lemma} \label{lemma:infinity}
For any $0 < \eta < 1$,
	\[ \E_1\left. \left[ \sup_{t \geq 0} \left( e^{\eta\alpha t} \left(B_\sigma(t) + 1\right)^{-\eta} \right) \right|\, \tau = +\infty \right] < +\infty. \]
\end{lemma}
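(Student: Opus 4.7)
The plan is to combine the immortal-line decomposition of a supercritical branching process conditioned on survival, a pointwise moment estimate for a Yule process, and Doob's $L^p$ inequality.

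\textbf{Reduction to the Yule spine.} Conditioned on $\{\tau=+\infty\}$, the $(p,\lambda)$-branching process $(Z(t))$ is a supercritical branching process conditioned on non-extinction; the classical Athreya--Ney spine decomposition gives $Z=Z_{(1)}+Y$, where $(Y(t))$ is a Yule process with parameter $\alpha=\mu_Z-\nu$ tracking the line of descent of the unique immortal particle and $Z_{(1)}$ collects the independent doomed subpopulations hanging off that spine. Every birth of the Yule spine is a birth of $(Z(t))$, so $B_\sigma(t)\ge Y(t)-Y(0)=Y(t)-1$, whence $B_\sigma(t)+1\ge Y(t)$ for all $t\ge 0$. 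It therefore suffices to establish
\[
\E_1\!\left[\sup_{t\ge 0}e^{\eta\alpha t}Y(t)^{-\eta}\right]<+\infty, \qquad Y(0)=1.
\]

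\textbf{Pointwise moment estimate.} Since $Y(t)$ is geometric with parameter $e^{-\alpha t}$, a direct computation yields
\[
e^{\eta\alpha t}\E_1\bigl(Y(t)^{-\eta}\bigr) = (1-u)^{1-\eta}\sum_{k\ge 1}\frac{u^{k-1}}{k^\eta},\qquad u:=1-e^{-\alpha t}\in[0,1).
\]
The right-hand side is continuous in $u$ on $[0,1)$, equals $1$ at $u=0$, and by Abel's theorem (or the Mellin representation $k^{-\eta}=\Gamma(\eta)^{-1}\!\int_0^\infty t^{\eta-1}e^{-kt}\,dt$, or a direct comparison with $\int_0^\infty u^x(1+x)^{-\eta}\,dx$) converges to a finite limit (in fact $\Gamma(1-\eta)$) as $u\uparrow 1$: the polylogarithm diverges like $\Gamma(1-\eta)(1-u)^{\eta-1}$, which the prefactor $(1-u)^{1-\eta}$ exactly compensates. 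Hence $\sup_{t\ge 0}e^{\eta\alpha t}\E_1(Y(t)^{-\eta})<+\infty$ for every $\eta\in(0,1)$.

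\textbf{Upgrade to the supremum via Doob.} The process $(e^{-\alpha t}Y(t))$ is a non-negative martingale and $y\mapsto y^{-\eta}$ is convex on $(0,\infty)$, so $M_t:=e^{\eta\alpha t}Y(t)^{-\eta}=(e^{-\alpha t}Y(t))^{-\eta}$ is a non-negative sub-martingale. Choose $p\in(1,1/\eta)$, which is possible because $0<\eta<1$. The pointwise estimate of the previous step applied with exponent $\eta p<1$ in place of $\eta$ gives $\sup_{t\ge 0}\E_1(M_t^p)<+\infty$, and Doob's $L^p$ inequality for non-negative sub-martingales yields
\[
\E_1\!\left[\Bigl(\sup_{t\ge 0}M_t\Bigr)^{\!p}\right]\le\Bigl(\tfrac{p}{p-1}\Bigr)^{\!p}\sup_{t\ge 0}\E_1(M_t^p)<+\infty,
\]
whence $\E_1[\sup_{t\ge 0}M_t]\le\bigl(\E_1[(\sup_t M_t)^p]\bigr)^{1/p}<+\infty$ by Jensen, which is the desired conclusion.

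\textbf{Main obstacle.} The single non-routine ingredient is the endpoint behavior $\sum_{k\ge 1}u^{k-1}/k^\eta\sim\Gamma(1-\eta)(1-u)^{\eta-1}$ as $u\uparrow 1$, where the divergence of the polylog is precisely cancelled by the prefactor $(1-u)^{1-\eta}$; verifying this cancellation uniformly over $u\in[0,1)$ is what makes the pointwise estimate work. Everything else---the Athreya--Ney decomposition, the martingale property of $(e^{-\alpha t}Y(t))$, and Doob's $L^p$ inequality---is classical and is used off the shelf.
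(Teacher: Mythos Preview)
Your proof is correct and follows essentially the same route as the paper: the Athreya--Ney decomposition $Z=Z_{(1)}+Y$ with $(Y(t))$ a Yule process of rate $\alpha$, the bound $B_\sigma(t)+1\ge Y(t)$, the explicit computation of $e^{\eta\alpha t}\E_1(Y(t)^{-\eta})$ via the geometric law and an integral comparison, and the upgrade to the supremum via the sub-martingale $(e^{\eta\alpha t}Y(t)^{-\eta})$ and Doob's $L^p$ inequality. Your treatment of the Doob step is in fact cleaner than the paper's: you explicitly pick $p\in(1,1/\eta)$ so that the pointwise estimate applies with exponent $\eta p<1$, whereas the paper states the conclusion with an unspecified constant $q(\eta)$ and leaves this choice implicit.
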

On the event $\{\tau < +\infty\}$, $(Z(t))$ hits a geometric number of times $0$ and
then couples with a $(p, \lambda)$-branching process conditioned on survival. On this event,
\begin{multline*}
\sup_{t \geq 0} \left( e^{\eta\alpha t} \left(B_\sigma(t) + 1\right)^{-\eta} \right)\\
= \max \left( \sup_{0 \leq t \leq \tau} \left( e^{\eta\alpha t} \left(B_\sigma(t) + 1\right)^{-\eta} \right), \sup_{t \geq \tau} \left( e^{\eta\alpha t} \left(B_\sigma(t) + 1\right)^{-\eta} \right) \right)\\
\leq e^{\eta\alpha \tau} \left( 1 + \sup_{t \geq 0} \left( e^{\eta\alpha t} \left(B'_\sigma(t) + 1\right)^{-\eta} \right) \right)
\end{multline*}
where $B_\sigma'(t)$ for $t \geq \tau$ is the number of births in $(\tau, t]$ of a $(p, \lambda)$-branching process
conditioned on survival and independent of the variable $\tau$, consequently
\begin{multline*}
\E_1 \left[ \left.\sup_{t \geq 0} \left( e^{\eta\alpha t} \left(B_\sigma(t) + 1\right)^{-\eta} \right)\right| \, \tau < +\infty \right] \leq \E_1\left( e^{\eta \alpha \tau} | \, \tau < +\infty \right)\\
\times\left( 1 + \E_1 \left. \left[ \sup_{t \geq 0} \left( e^{\eta\alpha t} \left(B_\sigma(t) + 1\right)^{-\eta} \right) \right|\, \tau = +\infty \right] \right).
\end{multline*}
In view of Lemma~\ref{lemma:infinity}, the proof of Proposition~\protect\ref{lemma:births}
will be finished if one can prove that
\[\E_1\left( e^{\eta \alpha \tau} | \, \tau < +\infty \right) < +\infty, \]
which actually comes from the following decomposition: under $\P_1(\, \cdot\, |\, \tau
< +\infty)$, the random variable  $\tau$ can be written as 
\[ \tau = \sum_{k=1}^{1+G} (T_k + E_{\mu_Z,k}) \]
where $G$ is a geometric random variable with parameter $q = \nu / \mu_Z$, $(T_k)$ is an 
i.i.d.\ sequence with the same distribution as the extinction time of a
$(p,\lambda)$-branching process starting with one particle and conditioned on extinction
and $(E_{\mu_Z,k})$ are i.i.d.\ exponential random variables with parameter $\mu_Z$. 

Since $q$ is the probability of extinction of a  $(p,\lambda)$-branching process started
with one particle, $G+1$ represents the number of times $(Z(t))$ hits $0$ before going to
infinity. This representation entails  
\[
\E_1 \left( e^{\eta \alpha \tau} \,|\, \tau < +\infty \right) = \E\left( \gamma(\eta)^{G+1} \right) \ \text{ where }\ \gamma(\eta) = \E\left( e^{\eta \alpha (T_1 + E_{\mu_Z,1})} \right).
\]

A $(p,\lambda)$-branching process conditioned on extinction is actually a
$(1-p,\lambda)$-branching process. See again Athreya and Ney~\cite{Athreya72:0}. Thus
$T_1$ satisfies the following recursive distributional equation: 
\[ T_1 \stackrel{\text{dist.}}{=} E_\lambda + \ind{\xi=2}(T_1 \vee T_2), \]
where $\P(\xi=2) = 1-p$ and $E_\lambda$ is an exponential random variable with
parameter~$\lambda$. This equation yields 
\[ 
\P(T_1 \geq t) \leq e^{-\lambda t} + 2\lambda (1-p) \int_0^t \P(T_1 \geq t-u) e^{-\lambda u}\, du, \]
and Gronwall's Lemma applied to the function $t \mapsto \exp(\lambda t)\P(T_1 \geq t)$
gives that
\[
\P(T_1\geq t)\leq e^{(\lambda -2\lambda p)t}=e^{(\nu-\mu_Z)t}
\]
hence for any $0 < \eta < 1$,
\[ \E_1(e^{\eta \alpha T_1}) \leq \frac{1}{1-\eta}. \]
Since $G$ is a geometric random variable with parameter $q$, $\E \left(
\gamma(\eta)^G\right)$ is finite if and only if $\gamma(\eta) < q$. Since finally 
\[
\gamma(\eta) = \frac{\mu_Z}{\mu_Z-\eta\alpha} \E\left( e^{\eta \alpha T_1} \right) \leq \frac{\mu_Z}{(1-\eta)(\mu_Z - \eta \alpha)},
\]
one can easily check that $\gamma(\eta) < q$ for $\eta < \eta^*(\nu/\mu_Z)$ as defined
by Equation~\eqref{eq:eta}, which concludes the proof of Proposition~\ref{lemma:births}. 

\providecommand{\bysame}{\leavevmode\hbox to3em{\hrulefill}\thinspace}
\providecommand{\MR}{\relax\ifhmode\unskip\space\fi MR }
\providecommand{\MRhref}[2]{%
  \href{http://www.ams.org/mathscinet-getitem?mr=#1}{#2}
}
\providecommand{\href}[2]{#2}

\end{document}